\documentclass[reqno]{amsart}

\usepackage{mathrsfs, amsfonts, amsmath, amssymb, amscd}
\usepackage{amsbsy,bm, amsthm, enumerate}

\usepackage[english]{babel}
\usepackage{amsmath,amssymb,amsthm}
\usepackage{hyperref}
\usepackage[centering]{geometry}
\usepackage{xcolor}
\usepackage{caption}
\hyphenation{re-pa-ra-met-ri-za-tion}
\usepackage{booktabs}

\renewcommand{\epsilon}{\varepsilon}            



\DeclareMathOperator{\sech}{sech}    

\newtheorem{theorem}{Theorem}[section]   
\newtheorem*{theorem*}{Theorem}          
\newtheorem{lemma}[theorem]{Lemma}
\newtheorem{proposition}[theorem]{Proposition}

\theoremstyle{definition}

\newtheorem{example}{Example}[section]  %

\newtheorem{remark}{Remark}[section]
\newtheorem*{fund}{Funding}
\newtheorem*{acknow}{Acknowledgments}

\numberwithin{equation}{section}

\title[Homogeneous hypersurfaces of Sol${ }^4_0$]
{Homogeneous hypersurfaces of the four-dimensional Thurston geometry Sol${ }^4_0$}

\author{Marie D'haene\and Guoxin Wei\and Zeke Yao\and Xi Zhang}

\address{Department of Mathematics, KU Leuven,  Celestijnenlaan 200B-Box 2400, 
	3001 Leuven, Belgium}
\email{marie.dhaene@kuleuven.be}

\address{School of Mathematical Sciences, South China Normal University,
	Guangzhou 510631, People's Republic of China}
\email{weiguoxin@tsinghua.org.cn,\ yaozkleon@163.com,\ zhangxisq@163.com}

\keywords{Homogeneous hypersurface, constant principal curvature,
	solvable Lie group, Thurston geometry.}

\makeatletter
\@namedef{subjclassname@2020}{\textup{2020} Mathematics Subject Classification} 
\makeatother
\subjclass[2020]{Primary 53C42; Secondary 53C40, 53C30}

\thanks{Z. Yao is the corresponding author}

\begin{document}
	
	\begin{abstract}
		In this paper, we classify hypersurfaces with constant principal curvatures in the four-dimensional Thurston geometry ${\rm Sol_0^4}$ under certain geometric conditions. As an application of the classification result, we give a complete classification of homogeneous hypersurfaces in
		${\rm Sol_0^4}$, which solves a problem raised by Erjavec and Inoguchi (Problem 6.4 of [J. Geom. Anal. 33, Art. 274, (2023)]).
	\end{abstract}
	
	\maketitle

	\section{Introduction}\label{sect:1}
	
	A hypersurface $M$ of a Riemannian manifold $\widetilde{M}$ is called {\it homogeneous}
	if there exists a closed subgroup $G$ of the isometry group ${\rm Iso}(\widetilde{M})$
	such that $M=G \cdot p=\{g\cdot p\mid g\in G\}$ for some point $p\in M$.
	Homogeneous hypersurfaces have constant principal curvatures and
	play a fundamental role in the study of isoparametric theory.
	
	The classification of homogeneous hypersurfaces constitutes a classical problem in submanifold geometry 
	and is equivalent to the classification of cohomogeneity one actions up to orbit equivalence.
	Here, an isometric action of a connected Lie group on
	$\widetilde{M}$ is said to be of cohomogeneity one if it admits a cohomogeneity one orbit.
	Two such actions on $\widetilde{M}$ are orbit equivalent if there exists an isometry of
	$\widetilde{M}$ that maps the orbits of one action onto those of the other.
	Furthermore, cohomogeneity one actions play an essential role in constructing special geometric structures, such as Einstein metrics and metrics with special holonomies.
	
	The classification of homogeneous hypersurfaces in Euclidean space and real hyperbolic space
	$\mathbb{R}H^n$ follows from the classification of isoparametric hypersurfaces in these spaces, established by Segre \cite{S} and Cartan \cite{C}, respectively. For the unit sphere
	$\mathbb{S}^n$, Hsiang-Lawson \cite{HL} gave the classification of homogeneous hypersurfaces, while Takagi-Takahashi \cite{TT} computed their principal curvatures. Recently, Li-Ma-Wang-Wang \cite{LMWW} extended this work by classifying M\"{o}bius homogeneous hypersurfaces in $\mathbb{S}^n$, which include the classical homogeneous examples.
	In the 1970s, Takagi \cite{T1} classified homogeneous hypersurfaces in complex projective space
	$\mathbb{C}P^n$, whereas the analogous classification for complex hyperbolic space
	$\mathbb{C}H^n$ was completed much later by Berndt-Tamaru \cite{BT}. The four simply connected rank one symmetric spaces of compact type ($\mathbb{S}^n$, $\mathbb{C}P^n$, the quaternionic projective space $\mathbb{H}P^n$ and the Cayley projective plane $\mathbb{O}P^2$) have been thoroughly studied and homogeneous hypersurfaces in
	$\mathbb{H}P^n$ and $\mathbb{O}P^2$ were classified in \cite{D1,I1,I2}. More broadly, Kollross \cite{K1} classified cohomogeneity one actions on irreducible symmetric spaces of compact type up to orbit equivalence.
	For rank one symmetric spaces of non-compact type ($\mathbb{R}H^n$,
	$\mathbb{C}H^n$, the quaternionic hyperbolic space $\mathbb{H}H^n$
	and the Cayley hyperbolic plane $\mathbb{O}H^2$), homogeneous hypersurfaces in
	$\mathbb{H}H^n$ and $\mathbb{O}H^2$ were classified in \cite{BT,DDO2,DDR}. Additionally, D\'{i}az-Ramos, Dom\'{i}nguez-V\'{a}zquez and Otero \cite{DDO} developed a structural framework for cohomogeneity one actions on (not necessarily irreducible) symmetric spaces of non-compact type and arbitrary rank.
	More recently, Solonenko and Sanmart\'{i}n-L\'{o}pez \cite{SS} completed the classification of isometric cohomogeneity one actions on all symmetric spaces of non-compact type
	up to orbit equivalence.
	In reducible symmetric spaces, there are also some studies on homogeneous hypersurfaces.
	For example, Urbano \cite{U}, Gao-Ma-Yao \cite{GMY}, Dom\'{i}nguez-V\'{a}zquez and Manzano \cite{DM},
	de Lima and Pipoli \cite{LP}
	classified homogeneous hypersurfaces and isoparametric hypersurfaces of $\mathbb{S}^2\times\mathbb{S}^2$, $\mathbb{H}^2\times\mathbb{H}^2$,
	$\mathbb{S}^n\times\mathbb{R}$ and $\mathbb{H}^n\times\mathbb{R}$ ($n\geq 2$), respectively.
	Gao-Ma-Yao \cite{GMY2} gave the classification of isoparametric hypersurfaces
	in the product $M_{\kappa_1}^2\times M_{\kappa_2}^2$ of two-dimensional space forms for
	$\kappa_1, \kappa_2\in \{-1,0,1\}$ with $\kappa_1\neq\kappa_2$.
	For further developments on homogeneous hypersurfaces in symmetric spaces and advances in isoparametric theory, we refer to \cite{C1,DDO2,GQTY}.
	
	In the latter half of the last century, Thurston \cite{T} raised the geometrization conjecture: Every compact three-manifold admits a canonical decomposition into pieces modeled on one of
	eight distinct geometries. These fundamental geometries are called Thurston geometry and are given by:
	$\mathbb{R}^3$, $\mathbb{S}^3$, $\mathbb{H}^3$, $\mathbb{S}^2\times\mathbb{R}$, $\mathbb{H}^2\times\mathbb{R}$, $\widetilde{{\rm SL}}(2,\mathbb{R})$,
	${\rm Nil}^3$ and ${\rm Sol}^3$.
	Notice that $\widetilde{{\rm SL}}(2,\mathbb{R})$,
	${\rm Nil}^3$ and ${\rm Sol}^3$ are homogeneous Riemannian manifolds but not symmetric spaces, and
	the classification of homogeneous surfaces in these manifolds can be found in references \cite{DFO,DM,MP}.
	In dimension four, there are $19$ distinct families of four-dimensional Thurston geometries (cf. \cite{F,W}), which are shown in Table~\ref{tab:thurston-geometries}.
	They are ordered according to the stabilizer under the action of the identity component of their isometry groups.
	
	\begingroup
	\setlength{\tabcolsep}{10pt} 
	\renewcommand{\arraystretch}{1.8} 
	\begin{table}[h]
		\centering
		\renewcommand{\arraystretch}{1.3} %
		\begin{tabular}{l l}
			\toprule
			Four-dimensional Thurston geometry    & Stabilizer \\
			\midrule
			$\mathbb{R}^4, \, \mathbb{S}^4, \, \mathbb{H}^4$ & $\text{SO}(4)$ \\ \hline
			$\mathbb{C}P^2, \, \mathbb{C}H^2$ & $\text{U}(2)$ \\ \hline
			$\mathbb{S}^3 \times \mathbb{R}, \, \mathbb{H}^3 \times \mathbb{R}$ & $\text{SO}(3)$ \\ \hline
			$\mathbb{S}^2 \times \mathbb{R}^2, \, \mathbb{S}^2 \times \mathbb{S}^2, \, \mathbb{H}^2 \times \mathbb{S}^2, \, \mathbb{H}^2 \times \mathbb{R}^2, \, \mathbb{H}^2 \times \mathbb{H}^2$ & $\text{SO}(2) \times \text{SO}(2)$ \\ \hline
			$\text{Sol}_0^4, \, \widetilde{\text{SL}}(2, \mathbb{R}) \times \mathbb{R}, \, \text{Nil}^3 \times \mathbb{R}$ & $\text{SO}(2)$ \\ \hline
			${\rm F^4}$ & $(\mathbb{S}^1)_{1,2}$ \\ \hline
			$\text{Nil}^4, \, \text{Sol}_1^4, \, \text{Sol}_{m,n}^4$ & $\{1\}$ \\ 
			\bottomrule
		\end{tabular}
		\caption{Four-dimensional Thurston geometries with the stabilizer of the identity component of the associated Lie group.}
		\label{tab:thurston-geometries} 
	\end{table}
	\endgroup
	
	As previously discussed, homogeneous surfaces in homogeneous three-manifolds, as well as homogeneous hypersurfaces in the first $12$ model spaces in Table~\ref{tab:thurston-geometries} have been classified.
	In this paper, we focus on the four-dimensional Thurston geometry ${\rm Sol_0^4}$ as the ambient space.
	In ${\rm Sol_0^4}$, D'haene-Inoguchi-Van der Veken \cite{DIV} classified Codazzi hypersurfaces and
	totally umbilical hypersurfaces, while Erjavec-Inoguchi \cite{EI2} studied minimal
	CR-submanifolds and raised the following fundamental problem:
	
	\vskip1mm\noindent
	{\bf Problem}. (cf. Problem 6.4 of \cite{EI2}) \ {\it Classify homogeneous hypersurfaces of ${\rm Sol_0^4}$.}
	\vskip1mm
	
	Before stating our main results, we first recall that the underlying manifold of ${\rm Sol_0^4}$ is a solvable Lie group given by \eqref{eqn:12.88}.
	The left invariant vector fields $E_1,E_2,E_3$ and $E_4$ on ${\rm Sol_0^4}$ are determined by  \eqref{eqn:2.2}. For a hypersurface $M$ of ${\rm Sol_0^4}$ with unit normal vector field $N$,
	we have four smooth functions $a, b, c, d$ on $M$ defined by
	$$
	a:=g(N,E_1), \ \ b:=g(N,E_2), \ \ c:=g(N,E_3), \ \ d:=g(N,E_4),
	$$
	where $g$ is the Riemannian metric of ${\rm Sol_0^4}$. The geometry of hypersurfaces in ${\rm Sol_0^4}$
	is closely related to these functions. In what follows, we call $a,b,c$ and $d$ the
	{\it angle functions} of $M$.
	
	Now, we study the hypersurfaces of ${\rm Sol_0^4}$ with constant principal curvatures.
	Under the assumption of constant angle functions $c$ and $d$,
	our first main result can be stated as follows.
	
	\begin{theorem}\label{thm:1.1}
		Let $M$ be a hypersurface of ${\rm Sol_0^4}$ with
		constant principal curvatures and constant angle functions $c$ and $d$.
		Then, up to isometries of ${\rm Sol_0^4}$, one of the
		following four cases occurs:
		\begin{itemize}
			
			\vskip1mm
			\item[(1)]
			$M$ is an open part of 
			\[M_{1,r}=\{(e^{x_3}\cos x_1 \tanh r,e^{x_3}\sin x_1 \tanh r,x_2,{x_3}+\ln ({\rm sech}\, r))\in {\rm Sol_0^4} \mid x_1,x_2,x_3\in \mathbb{R}\}\] 
			for some $r>0$, and has principal curvatures $\coth r$, $\tanh r$ and $-2\tanh r$ (Example~\ref{exam:3.4});
			
			\vskip1mm
			\item[(2)]
			$M$ is an open part of 
			\[M_{2,r}=\{(x_1,e^{x_3}\tanh r,x_2,{x_3}+\ln ({\rm sech}\, r))\in {\rm Sol_0^4}\mid x_1,x_2,x_3\in \mathbb{R}\}\] 
			for some $r\geq0$, and has principal curvatures $\tanh r$ (multiplicity 2) and $-2\tanh r$  (Example~\ref{exam:3.7});
			
			\vskip1mm
			\item[(3)]
			$M$ is an open part of 
			\[M_{3,r}=\left\{\left(x_1,x_2,\tfrac{1}{2}e^{-2x_3}\tanh(2r),x_3+\tfrac{1}{2}\ln(\cosh(2r))\right)\in {\rm Sol_0^4}\mid x_1,x_2,x_3\in \mathbb{R}\right\}\] 
			for some $r\geq0$, and has principal curvatures $\tanh(2r)$ (multiplicity 2) and $-2\tanh(2r)$ (Example~\ref{exam:3.8});
			
			\vskip1mm
			\item[(4)]
			$M$ is an open part of \[M_{4,0}=\{(x,y,z,0)\in {\rm Sol_0^4}\mid x,y,z\in \mathbb{R}\},\] and has principal curvatures $-2$ (multiplicity 2) and $1$ (Example~\ref{exam:3.3}).
		\end{itemize}
	\end{theorem}
	
	As a direct consequence of Theorem \ref{thm:1.1},
	we classify homogeneous hypersurfaces of ${\rm Sol_0^4}$,
	which solves the problem raised in~\cite{EI2}.
	
	\begin{theorem}\label{thm:1.2}
		Let $M$ be a homogeneous hypersurface of ${\rm Sol_0^4}$.
		Then, up to isometries of ${\rm Sol_0^4}$, one of the
		following four cases occurs:
		\begin{itemize}
			
			\vskip1mm
			\item[(1)]
			$M$ is $M_{1,r}$ for some $r>0$, which is an orbit through $(\tanh r,0,0,\ln(\sech r))$ of the subgroup 
			$\{(0,0,z,t)\in {\rm Sol_0^4}\mid z,t\in \mathbb{R}\}\times {\rm SO(2)} \cong (\mathbb R \rtimes_{\theta_1} \mathbb R) \times \rm{SO}(2)$ where $\theta_1(t) = e^{-2t}$ (Example~\ref{exam:3.4});
			
			\vskip1mm
			\item[(2)]
			$M$ is $M_{2,r}$ for some $r\geq0$, which is an orbit through $(0,\tanh r,0,\ln(\sech r))$ of the subgroup 
			$\{(x,0,z,t)\in\mathrm{Sol}^4_0 \mid x,z,t \in \mathbb R \} \cong \mathbb R^2 \rtimes_{\theta_2} \mathbb R$ 
			where $\theta_2(t) = \left(\begin{smallmatrix} e^{t} & 0 \\ 0 & e^{-2t} \end{smallmatrix}\right)$ (Example~\ref{exam:3.7});
			
			\vskip1mm
			\item[(3)]
			$M$ is $M_{3,r}$ for some $r\geq0$, which is an orbit through $(0,0,\tfrac12 \tanh(2r),\tfrac12 \ln(\cosh(2r)))$ 
			of the subgroup 
			$\{(x,y,0,t)\in\mathrm{Sol}^4_0 \mid x,y,t \in \mathbb R\} \cong \mathbb R^2 \rtimes_{\theta_3} \mathbb R$ where $\theta_3(t) = \left(\begin{smallmatrix} e^{t} & 0 \\ 0 & e^{t} \end{smallmatrix}\right)$ (Example~\ref{exam:3.8});
			
			\vskip1mm
			\item[(4)]
			$M$ is $M_{4,0}$, which is an orbit through $(0,0,0,0)$ of the subgroup
			$\{(x,y,z,0) \in \mathrm{Sol}^4_0 \mid x,y,z \in \mathbb R\} \cong \mathbb R^3$ (Example~\ref{exam:3.3}).
		\end{itemize}
	\end{theorem}

	\begin{remark}\label{rm:1.1}
		It is noteworthy that the hypersurfaces $\{M_{2,r},\ r\geq0\}$
		(resp. $\{M_{3,r}, r\geq0\}$) constitute a family of parallel hypersurfaces, which are always
		minimal, but for any given $r_1$ and $r_2$, the hypersurfaces $M_{2,r_1}$ and $M_{2,r_2}$
		(resp. $M_{3,r_1}$ and $M_{3,r_2}$) are not congruent to each other.
	\end{remark}
	
	This paper is organized as follows: In Section \ref{sect:2}, we collect some necessary
	materials about the ambient space ${\rm Sol_0^4}$ as well as its hypersurfaces.
	In Section \ref{sect:3}, we describe in detail the examples characterized by 
	Theorems \ref{thm:1.1} and \ref{thm:1.2}.
	Finally, we prove Theorems \ref{thm:1.1} and \ref{thm:1.2} in Section \ref{sect:4}.

	\section{Preliminaries}\label{sect:2}
	\subsection{The geometric structure on \texorpdfstring{${\rm Sol_0^4}$}{Sol40}}\label{sect:2.1}~
	
	In this subsection, we review some basic materials from \cite{DIV,EI2}.
	The underlying manifold of ${\rm Sol_0^4}$ is the following
	connected solvable Lie group
	\begin{equation}\label{eqn:12.88}
		\left\{
		(x, y, z, t):=
		\left.\left(
		\begin{matrix}
			e^t & 0   & 0       & x \\
			0   & e^t & 0       & y \\
			0   & 0   & e^{-2t} & z \\
			0   & 0   & 0       & 1 \\
		\end{matrix}
		\right) \right| \ x,y,z,t\in \mathbb{R}
		\right\}.
	\end{equation}
	Equivalently, the underlying manifold is
	$\mathbb{R}^4=\{(x,y,z,t)|\, x,y,z,t\in \mathbb{R}\}$ with group operation
	\begin{equation}\label{eqn:2.1}
		(\tilde{x},\tilde{y},\tilde{z},\tilde{t})\cdot(x,y,z,t)
		=(\tilde{x}+e^{\tilde{t}}x,\tilde{y}+e^{\tilde{t}}y,\tilde{z}+e^{-2\tilde{t}}z,\tilde{t}+t).
	\end{equation}
	
	At a point $p=(x,y,z,t)\in {\rm Sol_0^4}$, we define four left invariant vector fields by
	\begin{equation}\label{eqn:2.2}
		E_1=e^t \partial_x, \quad
		E_2=e^t \partial_y,\quad
		E_3=e^{-2t} \partial_z, \quad
		E_4= \partial_t.
	\end{equation}
	The following commutation relations hold:
	\begin{equation}\label{eqn:2.3}
		\begin{aligned}
			&[E_1,E_2]=[E_1,E_3]=[E_2,E_3]=0,\\
			&[E_1,E_4]=-E_1, \ \ [E_2,E_4]=-E_2, \ \ [E_3,E_4]=2E_3.
		\end{aligned}
	\end{equation}

	The Lie group associated to the Thurston geometry $\mathrm{Sol}^4_0$ is the semi-direct product $\mathrm{Sol}^4_0 \rtimes (\mathrm{O}(2) \times \mathbb Z_2)$, where ${\rm Sol_0^4}$ acts on itself by left translations, ${\rm O(2)}$ acts by rotations and reflections in the $xy$-plane and $\mathbb{Z}_2$ acts by reflections in the $z$-coordinate.
	The identity component of this group is $\mathrm{Sol}^4_0 \rtimes \mathrm{SO}(2)$, which is isomorphic to the matrix group
	\begin{equation}\label{eqn:2.100}
		\left\{\left.\left(
		\begin{matrix}
			e^t \cos \theta & -e^t \sin \theta & 0 & x \\
			e^t \sin \theta & e^t \cos \theta & 0 & y \\
			0 & 0 & e^{-2t} & z \\
			0 & 0 & 0 & 1
		\end{matrix}
		\right) \right|x, \, y, \, z, \, t \in \mathbb{R}, \, 0\leq\theta< 2\pi\right\}.
	\end{equation}
	
	One can show that there exists a unique Riemannian metric $g$ on ${\rm Sol_0^4}$ that is invariant under the action of     $\mathrm{Sol}^4_0 \rtimes (\mathrm{O}(2) \times \mathbb Z_2)$ (see~\cite{DIV}).
	It is given by
	\[
	g=e^{-2t}(dx^2+dy^2)+e^{4t}dz^2+dt^2.
	\]
	Note that the frame field $\{E_i\}_{i=1}^4$ in~\eqref{eqn:2.2} is orthonormal with respect to $g$.
	
	
	Let $\tilde{\nabla}$ be the Levi-Civita connection associated to $g$.
	By using \eqref{eqn:2.3} and Koszul's formula, we obtain
	\begin{equation}\label{eqn:2.6}
		\begin{alignedat}{4}
			&\tilde{\nabla}_{E_1}E_1=E_4, \qquad 
			&&\tilde{\nabla}_{E_1}E_2=0, \qquad
			&&\tilde{\nabla}_{E_1}E_3=0, \qquad
			&&\tilde{\nabla}_{E_1}E_4=-E_1, \\
			&\tilde{\nabla}_{E_2}E_1=0, \qquad 
			&&\tilde{\nabla}_{E_2}E_2=E_4\qquad \
			&&\tilde{\nabla}_{E_2}E_3=0, \qquad
			&&\tilde{\nabla}_{E_2}E_4=-E_2, \\
			&\tilde{\nabla}_{E_3}E_1=0, \qquad 
			&&\tilde{\nabla}_{E_3}E_2=0, \qquad
			&&\tilde{\nabla}_{E_3}E_3=-2E_4, \qquad \ \
			&&\tilde{\nabla}_{E_3}E_4=2E_3, \\
			&\tilde{\nabla}_{E_4}E_1=0, \qquad 
			&&\tilde{\nabla}_{E_4}E_2=0, \qquad
			&&\tilde{\nabla}_{E_4}E_3=0, \qquad
			&&\tilde{\nabla}_{E_4}E_4=0.
		\end{alignedat}
	\end{equation}

	On ${\rm Sol_0^4}$, we may define two complex structures, $J_1$ and $J_2$, by
	\begin{equation}\label{eqn:2.5}
		\begin{alignedat}{4}
			&J_1E_1=E_2, \quad &&J_1E_2=-E_1, \quad
			&&J_1E_3=E_4, \quad &&J_1E_4=-E_3, \\
			&J_2E_1=E_2, \quad &&J_2E_2=-E_1, \quad
			&&J_2E_3=-E_4, \quad &&J_2E_4=E_3.
		\end{alignedat}
	\end{equation}
	Both structures are compatible and integrable with respect to $g$.
	Note that none of these structures is K\"ahler.
	Additionally, we define another relevant tensor $P$ on ${\rm Sol_0^4}$ in the following way:
	\begin{equation}\label{eqn:2.49}
		PX=g(X,E_4)E_4, \qquad \text{ where }  X\in \mathfrak X({\rm Sol_0^4}).
	\end{equation}
	Using these structures, we may express the Riemannian curvature tensor $\tilde{R}$ of $({\rm Sol_0^4},g)$
	in the following way:
	\begin{equation}\label{eqn:2.7}
		\begin{aligned}
			\tilde{R}(X,Y)Z=\ 
			&2\big(g(Y,Z)X-g(X,Z)Y\big)\\
			&-\tfrac{1}{2}\big(g(J_1Y,Z)J_1X-g(J_1X,Z)J_1Y+2g(X,J_1Y)J_1Z\big)\\
			&-\tfrac{1}{2}\big(g(J_2Y,Z)J_2X-g(J_2X,Z)J_2Y+2g(X,J_2Y)J_2Z\big)\\
			&-3\big(g(PY,Z)X+g(Y,Z)PX-g(PX,Z)Y-g(X,Z)PY\big),
		\end{aligned}
	\end{equation}
	where $X, Y, Z\in \mathfrak X({\rm Sol_0^4})$.

	\subsection{Hypersurfaces of \texorpdfstring{${\rm Sol_0^4}$}{Sol40}}\label{sect:2.2}~
	
	Let $M$ be an isometrically immersed hypersurface of ${\rm Sol_0^4}$ with unit normal vector field $N$.
	We may write
	$$
	N=aE_1+bE_2+cE_3+dE_4,
	$$
	where $\{E_i\}_{i=1}^4$ is defined by \eqref{eqn:2.2}, and
	$a, b, c, d$ are smooth functions on $M$, called angle functions, satisfying
	$a^2+b^2+c^2+d^2=1$.
	Then the following vector fields form an orthonormal frame on $M$:
	\begin{equation}\label{eqn:8.1}
		\begin{aligned}
			&T_1=bE_1-aE_2+dE_3-cE_4, \\
			&T_2=cE_1-dE_2-aE_3+bE_4, \\
			&T_3=dE_1+cE_2-bE_3-aE_4. \\
		\end{aligned}
	\end{equation}
	
	Let $\nabla$ be the induced connection on $M$. The Gauss and Weingarten formulae, respectively, are
	$$
	\tilde{\nabla}_XY=\nabla_XY+g(AX,Y)N, \qquad \tilde{\nabla}_XN=-AX, 
	$$
	where $X,Y \in \mathfrak X(M)$ and $A$ is the shape operator of $M$.
	
	Let $R$ denote the Riemannian curvature tensor of $M$. The Gauss and Codazzi equations
	of $M$ are given by
	\begin{align}
		(\tilde R(X,Y)Z)^\top &=  R(X,Y)Z - g(AY,Z)AX + g(AX,Z)AY, \label{eqn:2.14} \\
		(\tilde R(X,Y)Z)^\perp &= g((\nabla_X A)Y - (\nabla_Y A)X,Z) N, \label{eqn:2.15}
	\end{align}
	%
	where $X,Y,Z\in \mathfrak X(M)$, and $\cdot^\top$ and $\cdot^\perp$ denote the tangential and normal components, respectively. 
	It follows from \eqref{eqn:2.14} that the Ricci curvature tensor of $M$ is given by
	\begin{equation}\label{eqn:MRic}
		\begin{aligned}
			g({\rm Ric}(X),Y)=&\left(-2+3d^2\right)g(X,Y)+\tfrac{3}{2}g(X,J_1N)g(Y,J_1N)+\tfrac{3}{2}g(X,J_2N)g(Y,J_2N)\\
			&-3g(PX,Y)+Hg(AX,Y)-g(A^2X,Y),
		\end{aligned}
	\end{equation}
	where $H={\rm Tr}A$ denotes the mean curvature of $M$.

	\section{Examples of homogeneous hypersurfaces of \texorpdfstring{${\rm Sol_0^4}$}{Sol40}}\label{sect:3}

	In this section, we introduce some examples of hypersurfaces in ${\rm Sol_0^4}$
	which are characterized by Theorems \ref{thm:1.1} and \ref{thm:1.2}.
	\begin{example}\label{exam:3.4}
		For any given $r>0$, we define the hypersurface
		$$
		M_{1,r}:=\{(e^{x_3}\cos x_1 \tanh r,e^{x_3}\sin x_1 \tanh r,x_2,{x_3}+\ln ({\rm sech}\, r))\in {\rm Sol_0^4}
		\mid x_1,x_2,x_3\in \mathbb{R}\}.
		$$
		Let $H_1:=\{(0,0,z,t)\in {\rm Sol_0^4}\mid z,t\in \mathbb{R}\}\times {\rm SO(2)}$.
		\begin{proposition}\label{prop:M1r}
			As a hypersurface of ${\rm Sol_0^4}$,
			$M_{1,r}$ has the following properties:
			\begin{itemize}
				
				\vskip1mm
				\item[(1)]
				The hypersurface $M_{1,r}$ can also be presented as $\{(x,y,z,t)\in {\rm Sol_0^4}
				\mid  (x^2+y^2)e^{-2t}=\sinh^2 r\}$. 
				The unit normal vector field is given by
				\begin{equation}\label{eqn:8.16}
					N=-\cos {x_1}\, {\rm sech}\, r E_1-\sin {x_1}\, {\rm sech}\, r E_2+\tanh r E_4,
				\end{equation}
				where $\{E_i\}_{i=1}^4$ is defined by \eqref{eqn:2.2};
				
				\vskip1mm
				\item[(2)]
				It has three distinct constant principal curvatures $\coth r$, $\tanh r$ and $-2\tanh r$. The eigenvalues of the Ricci tensor of $M_{1,r}$ are $0$ and $-4 {\rm sech}^2 r$ (multiplicity $2$); 
				
				\vskip1mm
				\item[(3)]
				It is an orbit of the subgroup $H_1$ which passes through the point
				$(\tanh r,0,0,\ln({\rm sech}\, r))$.
				Therefore $M_{1,r}$ is a homogeneous hypersurface;
				
				\vskip1mm
				\item[(4)]
				It is a tube of radius $r>0$ over the focal manifold
				$\{(0,0,z,t)\in {\rm Sol_0^4}\mid z,t\in \mathbb{R}\}$.
			\end{itemize}
		\end{proposition}
		
		\begin{proof}
			(1) At any $p\in M_{1,r}$, we consider the frame field
			$\{V_i\}_{i=1}^3$ given by
			\begin{equation*}
				\begin{aligned}
					&V_1=p_{x_1}=-\sin {x_1}\sinh r E_1+\cos {x_1}\sinh r E_2,\\
					&V_2=p_{x_3}=\cos {x_1}\sinh r E_1+\sin {x_1}\sinh r E_2+E_4,\\
					&V_3=p_{x_2}=e^{2 x_3}{\rm sech^2}\, r E_3,
				\end{aligned}
			\end{equation*}
			so $V_1(x_1)=1$.
			Hence,~\eqref{eqn:8.16} is the unit normal vector field.
			Additionally, a direct computation shows that $M_{1,r}$ equals $\{(x,y,z,t)\in {\rm Sol_0^4}
			\mid  (x^2+y^2)e^{-2t}=\sinh^2 r\}$.
			
			(2) Consider the orthonormal frame field $\{ W_i \}_{i=1}^3$ on $M_{1,r}$ given by:
			\begin{equation}\label{eqn:8.17}
				\begin{aligned}
					&W_1=\frac{1}{\sinh r}V_1=-\sin x_1E_1+\cos x_1E_2,\\
					&W_2=\frac{1}{\cosh r}V_2=\cos x_1\tanh r E_1+\sin x_1\tanh r E_2+{\rm sech}\, rE_4,\\
					&W_3=\frac{1}{e^{2x_3}{\rm sech^2}\, r}V_3=E_3.
				\end{aligned}
			\end{equation}
			Then, $W_1(-\cos x_1\, {\rm sech}\, r)=\tfrac{1}{\sinh r}V_1(-\cos x_1\, {\rm sech}\, r)
			=2\sin x_1\, {\rm csch}\, (2r)$, which does not vanish identically.
			
			By direct calculations using \eqref{eqn:2.6}, \eqref{eqn:8.16},
			\eqref{eqn:8.17} and the Weingarten formula, we obtain
			\begin{equation}\label{eqn:M1rz}
				AW_1=\coth r W_1, \quad AW_2=\tanh r W_2, \quad AW_3=-2\tanh r W_3.
			\end{equation}
			Then, by \eqref{eqn:2.14} and \eqref{eqn:M1rz}, it follows that the sectional curvature of $M_{1,r}$ satisfies
			\begin{equation}\label{eqn:M1rsec}
				K(W_1\wedge W_2)=K(W_1\wedge W_3)=0, \quad K(W_2\wedge W_3)=-4 {\rm sech}^2 r.
			\end{equation}
			The Ricci curvature of $M_{1,r}$ is given by 
			\begin{equation}\label{eqn:M1rric}
				{\rm Ric}(W_1)=0, \quad {\rm Ric}(W_2)=-4 {\rm sech}^2 r W_2,\quad {\rm Ric}(W_3)=-4 {\rm sech}^2 r W_3. 
			\end{equation}
			
			(3)-(4) Using the presentation~\eqref{eqn:2.100} of $\mathrm{Sol}^4_0 \rtimes \mathrm{SO}(2)$,
			we can directly verify that $H_1$ is a closed subgroup of $\mathrm{Sol}^4_0 \rtimes \mathrm{SO}(2)$,
			and that the hypersurface $M_{1,r}$ is an orbit
			of the subgroup $H_1$ which passes through $(\tanh r,0,0,\ln({\rm sech}\, r))$.
			The action of $H_1$ on ${\rm Sol_0^4}$ has only one singular orbit,
			$\{(0,0,z,t)\in {\rm Sol_0^4}\mid z,t\in \mathbb{R}\}$, which passes through the origin $(0,0,0,0)$.
			It can be checked that $M_{1,r}$ is a tube of radius $r$
			over the focal manifold
			$\{(0,0,z,t)\in {\rm Sol_0^4}\mid z,t\in \mathbb{R}\}$.
		\end{proof}
	\end{example}
	
	\begin{example}\label{exam:3.7}
		For any given $r\geq0$, we define the hypersurface
		$$
		M_{2,r}:=\{(x_1,e^{x_3}\tanh r,x_2,{x_3}+\ln ({\rm sech}\, r))\in {\rm Sol_0^4}
		\mid x_1,x_2,x_3\in \mathbb{R}\}.
		$$
		Let $H_2:=\{(x,0,z,t)\in {\rm Sol_0^4}\mid x,z,t\in \mathbb{R}\}$.
		\begin{proposition}\label{prop:M2r}
			As a hypersurface of ${\rm Sol_0^4}$,
			$M_{2,r}$ has the following properties:
			\begin{itemize}
				
				\vskip1mm
				\item[(1)]
				The hypersurface $M_{2,r}$ can also be presented as $\{(x,y,z,t)\in {\rm Sol_0^4}
				\mid  ye^{-t}=\sinh r\}$.  
				The unit normal vector field is given by $N=-{\rm sech}\, r E_2+\tanh r E_4$,
				where $\{E_i\}_{i=1}^4$ is defined by~\eqref{eqn:2.2};
				
				\vskip1mm
				\item[(2)]
				It is a minimal hypersurface and has constant principal curvatures
				$\tanh r$ (multiplicity $2$) and $-2\tanh r$. 
				The eigenvalues of the Ricci tensor of $M_{2,r}$ are ${\rm sech}^2 r$, $-5 {\rm sech}^2 r$ and $-2 {\rm sech}^2 r$.
				When $r=0$, $M_{2,0}$ is totally geodesic;
				
				\vskip1mm
				\item[(3)]
				It is an orbit of the subgroup $H_2$
				which passes through $(0,\tanh r,0,\ln ({\rm sech}\, r))$. Therefore,
				$M_{2,r}$ is a homogeneous hypersurface, and it has no focal manifold.
			\end{itemize}
		\end{proposition}
		
		\begin{proof}
			(1) At any $p\in M_{2,r}$, we consider the frame field
			$\{V_i\}_{i=1}^3$ given by
			$$
			V_1=p_{x_1}=e^{-x_3}\cosh r E_1,
			\quad V_2=p_{x_3}=\sinh r E_2+E_4,
			\quad V_3=p_{x_2}=e^{2x_3}{\rm sech^2}\, r E_3.
			$$
			Hence, $N=-{\rm sech}\, r E_2+\tanh r E_4$ is the unit normal vector field.
			Additionally, a direct computation shows that $M_{2,r}$ equals $\{(x,y,z,t)\in {\rm Sol_0^4}
			\mid  ye^{-t}=\sinh r\}$.
			
			(2) Consider the orthonormal frame field $\{W_i\}_{i=1}^3$ on $M_{2,r}$ given by:
			$$
			W_1=\frac{V_1}{e^{-x_3}\cosh r}=E_1, \quad W_2=\frac{V_2}{\cosh r}=\tanh r E_2+{\rm sech}\, r E_4, \quad
			W_3=\frac{V_3}{e^{2x_3}{\rm sech^2}\, r}=E_3.
			$$
			By direct calculations using~\eqref{eqn:2.6} and the Weingarten formula, we obtain
			\begin{equation}\label{eqn:M2rz}
				AW_1=\tanh r W_1, \quad AW_2=\tanh r W_2, \quad AW_3=-2\tanh r W_3.
			\end{equation}
			Then, by \eqref{eqn:2.14} and \eqref{eqn:M2rz}, it follows that  the sectional curvature of $M_{2,r}$ satisfies
			\begin{equation}\label{eqn:M2rsec}
				K(W_1\wedge W_2)=-{\rm sech}^2 r,\quad K(W_1\wedge W_3)=2 {\rm sech}^2 r, \quad K(W_2\wedge W_3)=-4 {\rm sech}^2 r.
			\end{equation}
			The Ricci curvature of $M_{2,r}$ is given by 
			\begin{equation}\label{eqn:M2rric}
				{\rm Ric}(W_1)={\rm sech}^2 r W_1, \quad {\rm Ric}(W_2)=-5 {\rm sech}^2 r W_2,\quad {\rm Ric}(W_3)=-2 {\rm sech}^2 r W_3. 
			\end{equation}

			(3) Using~\eqref{eqn:2.1},
			we can directly verify that $H_2$ is a closed subgroup of
			${\rm Sol_0^4 \rtimes SO(2)}$.
			Moreover, it follows from~\eqref{eqn:2.1} that
			the hypersurface $M_{2,r}$ is an orbit of the subgroup $H_2$
			which passes through $(0,\tanh r,0,\ln ({\rm sech}\, r))$ and
			it has no focal manifold.
		\end{proof}
	\end{example}
	
	\begin{example}\label{exam:3.8}
		For any given $r\geq0$, we define the hypersurface
		$$
		M_{3,r}:=\left\{\left(x_1,x_2,\tfrac{1}{2}e^{-2x_3}\tanh(2r),x_3+\tfrac{1}{2}\ln(\cosh(2r))\right)\in {\rm Sol_0^4}
		\mid x_1,x_2,x_3\in \mathbb{R}\right\}.
		$$
		Let $H_3:=\{(x,y,0,t)\in {\rm Sol_0^4}\mid x,y,t\in \mathbb{R}\}$.
		\begin{proposition}\label{prop:M3r}
			As a hypersurface of ${\rm Sol_0^4}$, $M_{3,r}$ has the following properties:
			\begin{itemize}
				
				\vskip1mm
				\item[(1)]
				The hypersurface $M_{3,r}$ can also be presented as $\{(x,y,z,t)\in {\rm Sol_0^4}
				\mid  2ze^{2t}=\sinh (2r)\}$.  
				The unit normal vector field is given by $N={\rm sech}(2r)E_3+\tanh(2r)E_4$,
				where $\{E_i\}_{i=1}^4$ is defined by \eqref{eqn:2.2};
				
				\vskip1mm
				\item[(2)]
				It is a minimal hypersurface and has constant principal curvatures $\tanh(2r)$ (multiplicity $2$)
				and $-2\tanh(2r)$.
				The hypersurface $M_{3,r}$ has constant sectional curvature $-{\rm sech}^2 (2r)$.
				When $r=0$, $M_{3,0}$ is a totally geodesic hypersurface;
				
				\vskip1mm
				\item[(3)]
				It is an orbit of the subgroup $H_3$ which passes through
				$(0,0,\tfrac{1}{2}\tanh(2r),\tfrac{1}{2}\ln(\cosh(2r)))$.
				Therefore, $M_{3,r}$ is a homogeneous hypersurface,
				and it has no focal manifold.
			\end{itemize}
		\end{proposition}
		
		\begin{proof}
			(1) At any $p\in M_{3,r}$, we consider the frame field $\{V_i\}_{i=1}^3$ given by
			\begin{equation*}
				\begin{aligned}
					V_1&=p_{x_1}=e^{-\left(x_3+\tfrac{1}{2}\ln(\cosh(2r))\right)}E_1,\\
					V_2&=p_{x_2}=e^{-\left(x_3+\tfrac{1}{2}\ln(\cosh(2r))\right)}E_2,\\
					V_3&=p_{x_3}=-\sinh(2r)E_3+E_4.
				\end{aligned}
			\end{equation*}
			Hence, $N={\rm sech}(2r)E_3+\tanh(2r)E_4$ is the unit normal vector field.
			Additionally, a direct computation shows that $M_{3,r}$ equals $\{(x,y,z,t)\in {\rm Sol_0^4}
			\mid  2ze^{2t}=\sinh (2r)\}$.
			
			(2) Consider the orthonormal frame field $\{W_i\}_{i=1}^3$ on $M_{3,r}$ given by:
			$$
			W_1=E_1,
			\quad W_2=E_2, \quad
			W_3=\frac{V_3}{\cosh(2r)}=-\tanh (2r)E_3+{\rm sech}(2r)E_4.
			$$
			By direct calculations using~\eqref{eqn:2.6} and the Weingarten formula, we obtain
			\begin{equation}\label{eqn:M3rz}
				AW_1=\tanh(2r)W_1, \quad AW_2=\tanh(2r)W_2, \quad AW_3=-2\tanh(2r)W_3.
			\end{equation}
			Then, by \eqref{eqn:2.14} and \eqref{eqn:M3rz}, it follows that the sectional curvature of $M_{3,r}$ satisfies
			\begin{equation}\label{eqn:M3rsec}
				K(W_1\wedge W_2)=K(W_1\wedge W_3)=K(W_2\wedge W_3)=-{\rm sech}^2 (2r).
			\end{equation}
			Since ${\rm dim}\ M_{3,r}=3$, $M_{3,r}$ has constant sectional curvature $-{\rm sech}^2 (2r)$.

			(3) Using~\eqref{eqn:2.1},
			we can directly verify that $H_3$ is a closed subgroup of
			${\rm Sol_0^4 \rtimes SO(2)}$. Moreover, it follows from \eqref{eqn:2.1} that
			the hypersurface $M_{3,r}$ is an orbit of the subgroup $H_3$
			which passes through $(0,0,\tfrac{1}{2}\tanh(2r),\tfrac{1}{2}\ln(\cosh(2r)))$ and
			it has no focal manifold.
		\end{proof}
	\end{example}
	
	\begin{remark}\label{rm:1.3}
		The totally geodesic hypersurfaces of ${\rm Sol_0^4}$ have
		been classified in~\cite[Cor.3.4]{DIV}.
		We point out that, after rotations in the $xy$-plane and left translations,
		the hypersurfaces $M_{3,0}$ and $M_{2,0}$ are congruent to the examples $(a)$ and $(b)$
		of Corollary 3.4, respectively.
	\end{remark}
	
	\begin{example}[cf.\ \cite{DIV,EI2}]\label{exam:3.3}
		For any given $r\in \mathbb{R}$, we consider the hypersurface $M_{4,r}$ defined by
		$$
		M_{4,r}:=\{(x,y,z,r)\in {\rm Sol_0^4}\mid x,y,z\in \mathbb{R}\}.
		$$
		
		Let $H_4:=\{(x,y,z,0)\in{\rm Sol_0^4}\mid x,y,z\in \mathbb{R}\}$.
		Then by \eqref{eqn:2.1}, we can directly verify that $H_4$ is a closed subgroup of ${\rm Sol_0^4 \rtimes SO(2)}$.
		By using \eqref{eqn:2.1}, \eqref{eqn:2.2}, \eqref{eqn:2.6} and the Weingarten formula,
		we have the following proposition without proof.
		\begin{proposition}\label{prop:M4r}
			As a hypersurface of ${\rm Sol_0^4}$,
			$M_{4,r}$ has the following properties:
			\begin{itemize}
				
				\vskip1mm
				\item[(1)]
				The unit normal vector field is given by $N=E_4$, where $E_4$ is defined by \eqref{eqn:2.2};
				
				\vskip1mm
				\item[(2)]
				It is a minimal hypersurface and has two distinct constant principal
				curvatures $1$ (multiplicity $2$) and $-2$. 
				The hypersurface $M_{4,r}$ has constant sectional curvature $0$;
				
				\vskip1mm
				\item[(3)]
				It is an orbit of the subgroup $H_4$ which passes through
				$(0,0,0,r)$. Therefore, the hypersurface $M_{4,r}$ is a homogeneous hypersurface,
				and it has no focal manifold. Moreover, for any given~$r$,
				the hypersurface $M_{4,r}$ is congruent to $M_{4,0}$.
			\end{itemize}
		\end{proposition}
	\end{example}
	
	\section{Proofs of Theorem \texorpdfstring{\ref{thm:1.1}}{1.1} and Theorem \texorpdfstring{\ref{thm:1.2}}{1.2}}\label{sect:4}
	
	In this section, we first study the hypersurfaces of ${\rm Sol_0^4}$
	with constant principal curvatures and constant angle functions $c$ and $d$, as introduced in Section~\ref{sect:2.2}.
	
	\subsection{Proof of Theorem \texorpdfstring{\ref{thm:1.1}}{1.1}}~
	
	Assume that $M$ is a hypersurface of ${\rm Sol_0^4}$ with constant principal curvatures
	and unit normal vector field $N=aE_1+bE_2+cE_3+dE_4$, where  $\{E_i\}_{i=1}^4$ is
	defined by~\eqref{eqn:2.2}.
	From $a^2+b^2+c^2+d^2=1$, it follows that $|d|\leq1$ and $|c|\leq1$. Moreover,
	up to changing the sign of the unit normal 
	and up to the action of $\mathbb{Z}_2$,
	we may always assume that $d\geq0$ and $c\geq0$.
	
	Since $c$ and $d$ are constants, it suffices to consider the following
	three cases:
	\vspace*{1ex}
	
	\begin{tabular}{l l}
		{\bf Case I} & $0\leq d<1$ and $c=0$ on $M$; \\
		{\bf Case II} & $0\leq d<1$ and $0<c\leq1$ on $M$; \\
		{\bf Case III} & $d=1$ and $c=0$ on $M$.
	\end{tabular}
	\vspace*{1ex}
	
	First, we focus on {\bf Case I}.
	In what follows, we prove that up to isometries of ${\rm Sol_0^4}$,
	$M$ is either an open part of $M_{1,r}$ for some $r>0$, or
	$M$ is an open part of $M_{2,r}$ for some $r\geq0$.
	Recall  that, in this case, we have $N=aE_1+bE_2+dE_4$, where $a^2+b^2+d^2=1$, $0\leq d<1$
	and $d$ is a constant.
	
	Consider the
	local orthonormal frame field $\{W_i\}_{i=1}^3$ defined by
	\begin{equation}\label{eqn:2.16}
		W_1=\frac{bE_1-a E_2}{\sqrt{1-d^2}}, \quad
		W_2=\frac{adE_1+bd E_2}{\sqrt{1-d^2}}-\sqrt{1-d^2}E_4, \quad
		W_3=E_3,
	\end{equation}
	where $\{E_i\}_{i=1}^4$ is defined by~\eqref{eqn:2.2}.
	In terms of the frame $\{W_i\}_{i=1}^3$, $J_1$, $J_2$ and $P$ are given by
	\begin{equation}\label{eqn:2.20}
		\begin{aligned}
			&J_1W_1=J_2W_1=dW_2+\sqrt{1-d^2}N, \quad J_1W_2=-dW_1+\sqrt{1-d^2}W_3,\\
			&J_2W_2=-dW_1-\sqrt{1-d^2}W_3, \quad J_1W_3=-J_2W_3=-\sqrt{1-d^2}W_2+dN,\\
			& PW_1=0, \quad PW_2=(1-d^2)W_2-d\sqrt{1-d^2}N, \quad PW_3=0.
		\end{aligned}
	\end{equation}

	Note that $M=\Omega_1\cup \Omega_2\cup \Omega_3$, where
	\begin{equation*}
		\begin{aligned}
			&\Omega_1=\{p\in M|\, b(p)\neq0, W_1(a)|_p\neq0\},\\
			&\Omega_2=\{p\in M|\, b(p)\neq0, W_1(a)|_p=0\},\\
			&\Omega_3=\{p\in M|\, b(p)=0\}.
		\end{aligned}
	\end{equation*}
	If $\Omega_1$ is non-empty, then $\Omega_1$ is an open subset of $M$.
	\begin{lemma}\label{lem:4.3}
		Assume that $\Omega_1$ is non-empty. 
		Then, on $\Omega_1$ it holds that $d\neq0$
		and
		\begin{equation}\label{eqn:2.18}
			W_1(a)=-\frac{b\sqrt{1-d^2}}{d}, \quad 
			W_1(b)=\frac{a\sqrt{1-d^2}}{d},\quad 
			W_2(a)=W_3(a)
			= W_2(b)=W_3(b)=0.
		\end{equation}
		Moreover, with respect to $\{W_i\}_{i=1}^3$,
		the shape operator is given by
		\begin{equation}\label{eqn:8.20}
			AW_1=\tfrac{1}{d}W_1, \quad AW_2=dW_2, \quad AW_3=-2dW_3.
		\end{equation}
		The connection coefficients with respect to $\{W_i\}_{i=1}^3$ are given by
		\begin{equation}\label{eqn:9.45}
			\begin{aligned}
				\nabla_{W_i}W_j&=0 \ ( i=1, 2, \ j=1, 2, 3), \quad \nabla_{W_3}W_1=0,\\
				\nabla_{W_3}W_2&=-2\sqrt{1-d^2}W_3, \quad
				\nabla_{W_3}W_3=2\sqrt{1-d^2}W_2.
			\end{aligned}
		\end{equation}
	\end{lemma}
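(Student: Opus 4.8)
The plan is to obtain all of \eqref{eqn:2.18}, \eqref{eqn:8.20} and \eqref{eqn:9.45} from the Weingarten and Codazzi equations, using throughout that $c\equiv 0$ and that $d$ is constant, so that $a^2+b^2=1-d^2$ is constant as well. Write $s:=\sqrt{1-d^2}$ (nonzero on all of Case I since $d<1$) and $W_4:=N$, as in \eqref{eqn:2.20}. First I would invert \eqref{eqn:2.16} to express $E_1,E_2,E_3,E_4$ through $W_1,W_2,W_3,N$, equivalently recording the inner products $g(W_i,E_j)$. Differentiating $N=aE_1+bE_2+dE_4$ with the Levi-Civita connection \eqref{eqn:2.6} gives, for any tangent $X$, the relations $\tilde\nabla_XE_1=g(X,E_1)E_4$, $\tilde\nabla_XE_2=g(X,E_2)E_4$, $\tilde\nabla_XE_3=-2g(X,E_3)E_4$ and $\tilde\nabla_XE_4=-g(X,E_1)E_1-g(X,E_2)E_2+2g(X,E_3)E_3$. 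Feeding these into $\tilde\nabla_{W_i}N=-AW_i$ and projecting onto the $W$-frame yields
$$AW_1=\lambda_1W_1,\qquad AW_2=\mu_2W_1+dW_2,\qquad AW_3=\mu_3W_1-2dW_3,$$
where $\lambda_1=d-\tfrac{b}{s}W_1(a)+\tfrac{a}{s}W_1(b)$, and $\mu_2,\mu_3$ are explicit linear combinations of $W_2(a),W_2(b)$ and of $W_3(a),W_3(b)$. The $N$-components cancel automatically because $aW_i(a)+bW_i(b)=\tfrac12 W_i(a^2+b^2)=0$.

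Second, self-adjointness of $A$ forces $\mu_2=g(AW_2,W_1)=g(W_2,AW_1)=0$ and likewise $\mu_3=0$. Coupling each of these with $aW_i(a)+bW_i(b)=0$ produces a $2\times2$ system of determinant $-s^2\neq 0$, so $W_2(a)=W_2(b)=W_3(a)=W_3(b)=0$. In particular $A$ is already diagonal with eigenvalues $\lambda_1,d,-2d$, and since $d$ is constant the hypothesis of constant principal curvatures makes $\lambda_1$ constant; using $aW_1(a)+bW_1(b)=0$ once more one rewrites $\lambda_1=d-\tfrac{s}{b}W_1(a)$.

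Third, I would compute the induced connection from the Gauss formula $\nabla_{W_i}W_j=(\tilde\nabla_{W_i}W_j)^\top$. With the vanishing derivatives just established, every entry of \eqref{eqn:9.45} comes out at once except the two involving $W_1$: setting $m:=dW_1(a)+bs$, one finds $\nabla_{W_1}W_1=-\tfrac{m}{b}W_2$ and $\nabla_{W_1}W_2=\tfrac{m}{b}W_1$, while $\nabla_{W_3}W_2=-2sW_3$ and $\nabla_{W_3}W_3=2sW_2$ already agree with \eqref{eqn:9.45}. Thus the entire lemma reduces to proving $m=0$ together with $d\neq 0$.

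The decisive, and hardest, step is the Codazzi equation \eqref{eqn:2.15} on the triple $(X,Y,Z)=(W_1,W_2,W_1)$. Using $AW_1=\lambda_1W_1$ with $\lambda_1$ constant, $AW_2=dW_2$, and the connection above, the left-hand side equals $\tfrac{m}{b}(d-\lambda_1)$; evaluating the right-hand side from \eqref{eqn:2.20} (together with the induced actions $J_1N=-sW_1-dW_3$, $J_2N=-sW_1+dW_3$, $PN=-dsW_2+d^2N$) makes the $J_1$-, $J_2$- and $P$-contributions cancel, so it vanishes, giving $\tfrac{m}{b}(d-\lambda_1)=0$. On $\Omega_1$ we have $W_1(a)\neq 0$ and $b\neq 0$, whence $\lambda_1=d-\tfrac{s}{b}W_1(a)\neq d$; therefore $m=0$. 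Finally $d=0$ would give $m=bs\neq 0$, a contradiction, so $d\neq 0$, and $m=0$ reads $W_1(a)=-\tfrac{bs}{d}$. This is exactly \eqref{eqn:2.18} (with $W_1(b)=-\tfrac{a}{b}W_1(a)=\tfrac{as}{d}$), it upgrades $\lambda_1$ to $\tfrac1d$, yielding \eqref{eqn:8.20}, and it annihilates the two remaining connection terms, completing \eqref{eqn:9.45}. I expect the main obstacle to be precisely this Codazzi bookkeeping: most other admissible triples (for instance $(W_3,W_2,W_3)$) collapse to tautologies such as $-6ds=-6ds$, so the content lies in selecting the one triple whose right-hand side vanishes while its left-hand side isolates $m$.
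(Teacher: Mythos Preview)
Your proposal is correct and follows essentially the same route as the paper: compute $A$ via Weingarten, use self-adjointness to kill $W_2(a),W_3(a)$, derive the connection, and then apply the Codazzi equation to the triple $(W_1,W_2,W_1)$ to obtain the key relation $m\cdot W_1(a)=0$ (the paper writes this as $W_1(a)\big(b(1-d^2)+d\sqrt{1-d^2}\,W_1(a)\big)=0$, which is your $\tfrac{s}{b^2}\,m\,W_1(a)=0$). Your packaging via $s=\sqrt{1-d^2}$ and $m=dW_1(a)+bs$ is a clean way to track the one nontrivial quantity, and your observation that other Codazzi triples collapse to identities is accurate; otherwise the argument and the paper's coincide step for step.
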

	
	\begin{proof}
		On $\Omega_1$, using the fact that $a^2+b^2+d^2=1$, $b\neq0$ and $d$ is a constant,
		we have $W_i(b)=-\tfrac{a}{b}W_i(a)$ for $i=1,2,3$.
		Combining $W_1(b)=-\tfrac{a}{b}W_1(a)$ with \eqref{eqn:2.6} and the Weingarten formula,
		we obtain
		\begin{equation}\label{eqn:2.117}
			\begin{aligned}
				AW_1
				&=-\big(W_1(a)E_1+a\tilde{\nabla}_{W_1}E_1+W_1(b)E_2+b\tilde{\nabla}_{W_1}E_2
				+d\tilde{\nabla}_{W_1}E_4\big)\\
				&=\left(d-\frac{\sqrt{1-d^2}}{b} W_1(a) \right)W_1.
			\end{aligned}
		\end{equation}
		Similar calculations yield
		\begin{equation}\label{eqn:2.17}
			AW_2 = -\frac{\sqrt{1-d^2}}{b}W_2(a) W_1+dW_2,\quad
			AW_3 = -\frac{\sqrt{1-d^2}}{b}W_3(a) W_1-2dW_3.
		\end{equation}
		By symmetry of the shape operator, we obtain
		\begin{equation}\label{eqn:8.46}
			W_2(a)=W_3(a)=0,
		\end{equation}
		which implies that $W_2(b)=W_3(b)=0$.
		
		Using~\eqref{eqn:2.6}, \eqref{eqn:2.16}, \eqref{eqn:2.117},
		$W_1(b)=-\tfrac{a}{b}W_1(a)$ and the Gauss formula,
		we can calculate the connection to obtain
		\begin{equation*}
			\begin{aligned}
				\nabla_{W_1}W_1
				&=\frac{1}{\sqrt{1-d^2}}
				\left(W_1(b)E_1+b\tilde{\nabla}_{W_1}E_1-W_1(a)E_2
				-a\tilde{\nabla}_{W_1}E_2\right)
				-\left(d-\frac{\sqrt{1-d^2}}{b}W_1(a)\right)N\\
				&=-\left(\sqrt{1-d^2}+\frac{dW_1(a)}{b}\right)W_2.
			\end{aligned}
		\end{equation*}
		Similar calculations give the other connection coefficients:
		\begin{equation}\label{eqn:2.19}
			\begin{aligned}
				\nabla_{W_1}W_1&=-\left(\sqrt{1-d^2}+\frac{dW_1(a)}{b}\right)W_2, \quad
				\nabla_{W_1}W_2=\left(\sqrt{1-d^2}+\frac{dW_1(a)}{b}\right)W_1,\\
				\nabla_{W_1}W_3&=0, \quad \nabla_{W_2}W_1=0, \quad \nabla_{W_2}W_2=0, \quad \nabla_{W_2}W_3=0,\\
				\nabla_{W_3}W_1&=0, \quad \nabla_{W_3}W_2=-2\sqrt{1-d^2}W_3, \quad \nabla_{W_3}W_3=2\sqrt{1-d^2}W_2.
			\end{aligned}
		\end{equation}

		Taking $(X,Y,Z)=(W_1,W_2,W_1)$ in the Codazzi equation \eqref{eqn:2.15},
		with the use of \eqref{eqn:2.20}, \eqref{eqn:2.117}, \eqref{eqn:2.17},
		\eqref{eqn:8.46}, \eqref{eqn:2.19} and
		the assumption of constant principal curvatures, we obtain
		$$
		W_1(a)\Big(b(1-d^2)+d\sqrt{1-d^2}W_1(a)\Big)=0.
		$$
		Since $W_1(a)\neq0$ on $\Omega_1$, we have $b(1-d^2)+d\sqrt{1-d^2}W_1(a)=0$.
		So it follows from $b\neq0$ and $d<1$ that $d\neq0$. Therefore,
		$W_1(a)=-\tfrac{b\sqrt{1-d^2}}{d}$ and $W_1(b)=-\tfrac{a}{b}W_1(a)=\tfrac{a\sqrt{1-d^2}}{d}$.
		Substituting this into \eqref{eqn:2.117} and \eqref{eqn:2.19}, respectively,
		we obtain \eqref{eqn:8.20} and \eqref{eqn:9.45}.
	\end{proof}
	
	\begin{lemma}\label{lem:4.45}
		Assume that $\Omega_2$ is non-empty and contains an open subset $\Omega_{21}$ of $M$.
		Then,
		$a$ and $b$ are constants on $\Omega_{21}$.
		Moreover, with respect to $\{W_i\}_{i=1}^3$,
		the shape operator is given by
		\begin{equation}\label{eqn:8.21}
			AW_1=dW_1, \quad AW_2=dW_2, \quad AW_3=-2dW_3.
		\end{equation}
		The connection coefficients with respect to $\{W_i\}_{i=1}^3$ are given by
		\begin{equation}\label{eqn:9.46}
			\begin{aligned}
				\nabla_{W_1}W_1&=-\sqrt{1-d^2}W_2, \quad \nabla_{W_1}W_2=\sqrt{1-d^2}W_1, \quad \nabla_{W_1}W_3=0,\\
				\nabla_{W_2}W_1&=0, \quad \nabla_{W_2}W_2=0, \quad \nabla_{W_2}W_3=0,\\
				\nabla_{W_3}W_1&=0, \quad \nabla_{W_3}W_2=-2\sqrt{1-d^2}W_3, \quad \nabla_{W_3}W_3=2\sqrt{1-d^2}W_2.
			\end{aligned}
		\end{equation}
	\end{lemma}
	
	\begin{proof}
		Using a similar method as for the proof of Lemma~\ref{lem:4.3},
		we can show that the equations \eqref{eqn:2.117}--\eqref{eqn:2.19} also hold on $\Omega_{21}$.
		Combining $W_1(a)=0$ with~\eqref{eqn:8.46} on $\Omega_{21}$,
		we derive that $a$ and $b$ are constants.
		Substituting $W_1(a)=0$ into~\eqref{eqn:2.117} and~\eqref{eqn:2.19},
		we obtain~\eqref{eqn:8.21} and~\eqref{eqn:9.46}.
	\end{proof}
	
	\begin{lemma}\label{lem:4.46}
		Assume that $\Omega_3$ is non-empty and contains an open subset $\Omega_{31}$ of $M$.
		Then, $a$ is constant on $\Omega_{31}$. 
		Moreover, with respect to $\{W_i\}_{i=1}^3$,
		the shape operator is given by
		\begin{equation}\label{eqn:8.22}
			AW_1=dW_1, \quad AW_2=dW_2, \quad AW_3=-2dW_3.
		\end{equation}
		The connection coefficients with respect to $\{W_i\}_{i=1}^3$ are given by
		\begin{equation}\label{eqn:9.47}
			\begin{aligned}
				\nabla_{W_1}W_1&=-\sqrt{1-d^2}W_2, \quad \nabla_{W_1}W_2=\sqrt{1-d^2}W_1, \quad \nabla_{W_1}W_3=0,\\
				\nabla_{W_2}W_1&=0, \quad \nabla_{W_2}W_2=0, \quad \nabla_{W_2}W_3=0,\\
				\nabla_{W_3}W_1&=0, \quad \nabla_{W_3}W_2=-2\sqrt{1-d^2}W_3, \quad \nabla_{W_3}W_3=2\sqrt{1-d^2}W_2.
			\end{aligned}
		\end{equation}
	\end{lemma}
	
	\begin{proof}
		On $\Omega_{31}$, we have $a^2+d^2=1$.
		Therefore, since $d$ is a constant, $a$ is also a constant. Additionally,
		using the fact that $b=0$, together with the Gauss and Weingarten formulae and  equations~\eqref{eqn:2.6}   and~\eqref{eqn:2.16}, we obtain
		\begin{equation*}
			AW_1
			=-a\tilde{\nabla}_{W_1}E_1-d\tilde{\nabla}_{W_1}E_4
			=\frac{-ad}{\sqrt{1-d^2}}E_2=dW_1,
		\end{equation*}
		\begin{equation*}
			\nabla_{W_1}W_1=\tilde{\nabla}_{W_1}W_1-g(AW_1,W_1)N
			=-adE_1+(1-d^2)E_4=-\sqrt{1-d^2}W_2.
		\end{equation*}
		Similar calculations show that
		\eqref{eqn:8.22} and \eqref{eqn:9.47} hold.
	\end{proof}
	
	From Lemmas \ref{lem:4.3}--\ref{lem:4.46}, we derive the following result.
	\begin{lemma}\label{lem:4.47}
		In {\bf Case I}, one of the following three subcases occurs:
		\vspace*{1ex}
		
		\begin{tabular}{l l}
			{\bf Case I-(i)} & $\Omega_1$ is dense in $M$; \\
			{\bf Case I-(ii)} & $M=\Omega_2$; \\
			{\bf Case I-(iii)} & $M=\Omega_3$.
		\end{tabular}
	\end{lemma}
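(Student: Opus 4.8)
The plan is to combine the per-region computations of Lemmas \ref{lem:4.3}, \ref{lem:4.45} and \ref{lem:4.46} with two global inputs: the hypothesis of constant principal curvatures and the connectedness of $M$. First I would record the structural facts of the decomposition: $M=\Omega_1\cup\Omega_2\cup\Omega_3$ is disjoint, $\Omega_1$ is open, $\Omega_3=\{b=0\}$ is closed, and the frame $\{W_i\}_{i=1}^3$ of \eqref{eqn:2.16} is globally defined throughout Case I. The decisive observation is that the three lemmas produce rigidly different spectra for the shape operator: by Lemma \ref{lem:4.3}, on $\Omega_1$ one has $d\neq0$ and principal curvatures $\tfrac{1}{d},d,-2d$, whereas by Lemmas \ref{lem:4.45} and \ref{lem:4.46} every \emph{open} subset of $\Omega_2$ (an admissible $\Omega_{21}$) or of $\Omega_3$ (an admissible $\Omega_{31}$) carries principal curvatures $d,d,-2d$. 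Since $0\le d<1$, one checks directly that $\tfrac1d\neq d$ and $\tfrac1d\neq -2d$, so these two multisets of principal curvatures never coincide.

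Assume first that $\Omega_1\neq\emptyset$. Because $M$ has constant principal curvatures, the spectrum of the shape operator is the same at every point of $M$, hence equal to $\{\tfrac1d,d,-2d\}$. If $\Omega_2\cup\Omega_3$ had nonempty interior, I would pick a nonempty open set $U\subseteq\Omega_2\cup\Omega_3$; then either $U\cap\{b\neq0\}$ is a nonempty open subset of $\Omega_2$ to which Lemma \ref{lem:4.45} applies, or else $U\subseteq\{b=0\}=\Omega_3$ is a nonempty open subset to which Lemma \ref{lem:4.46} applies. In either case the spectrum on $U$ is $\{d,d,-2d\}\neq\{\tfrac1d,d,-2d\}$, a contradiction. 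Thus the interior of $\Omega_2\cup\Omega_3$, which equals $M\setminus\overline{\Omega_1}$, is empty, so $\overline{\Omega_1}=M$ and $\Omega_1$ is dense: this is Case I-(i).

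Next suppose $\Omega_1=\emptyset$, so that $\Omega_2=\{b\neq0\}$ is open and $M=\Omega_2\sqcup\Omega_3$. Here the spectral dichotomy is useless, since the principal curvatures agree on $\Omega_2$ and $\Omega_3$, and I would instead argue by continuity and connectedness. As $\Omega_2$ is itself open, Lemma \ref{lem:4.45} applies on all of it; together with \eqref{eqn:8.46} this gives $W_i(a)=0$ for every $i$, and since $a^2+b^2=1-d^2$ is constant it follows that $W_i(b)=0$ as well. Hence $a$ and $b$ are constant on each connected component $C$ of $\Omega_2$, say $b\equiv b_0\neq0$ on $C$. By continuity of $b$ this value persists on $\overline{C}$, so $\overline{C}\subseteq\{b\neq0\}=\Omega_2$; since distinct components of the open set $\Omega_2$ are mutually disjoint open sets, this forces $\overline{C}=C$. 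Thus each component of $\Omega_2$ is clopen in $M$, and connectedness of $M$ yields either $\Omega_2=M$ (Case I-(ii)) or $\Omega_2=\emptyset$, i.e. $M=\Omega_3$ (Case I-(iii)). Finally I would observe that the three subcases are mutually exclusive on the nonempty connected $M$, since I-(i) requires $\Omega_1\neq\emptyset$ while I-(ii) and I-(iii) both require $\Omega_1=\emptyset$ together with $\Omega_3=\emptyset$ and $\Omega_2=\emptyset$ respectively.

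The main obstacle I anticipate is precisely the branch $\Omega_1=\emptyset$: there the constant-principal-curvature argument cannot distinguish $\Omega_2$ from $\Omega_3$, so one must extract rigidity purely from the local constancy of the angle functions and propagate it across the boundary $\{b=0\}$. The delicate points are verifying that $a,b$ are genuinely constant along each component of $\Omega_2$ (not merely that their $W_i$-derivatives vanish) and that their nonzero limiting values cannot degenerate at the closure; once these are in hand, the clopen argument closes the case. By contrast, the $\Omega_1\neq\emptyset$ branch is essentially automatic once the three spectra are compared, so the whole lemma rests on carefully separating these two mechanisms.
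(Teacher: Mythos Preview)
Your proof is correct and follows essentially the same strategy as the paper: the spectral mismatch $\{\tfrac1d,d,-2d\}\neq\{d,d,-2d\}$ handles the branch $\Omega_1\neq\emptyset$, while constancy of $b$ on $\Omega_2$ together with continuity rules out coexistence of $\Omega_2$ and $\Omega_3$ when $\Omega_1=\emptyset$. Your treatment of the second branch via the clopen argument on components of $\Omega_2$ is in fact more careful than the paper's, which simply asserts that $b$ is a nonzero constant on $\Omega_2$ without addressing possible disconnectedness.
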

	
	\begin{proof}
		If $\Omega_1$ is non-empty, then both $\Omega_2$ and $\Omega_3$ do not
		contain any non-empty open subset of~$M$. In fact, from Lemma \ref{lem:4.3},
		the principal curvatures on $\Omega_1$ are $\tfrac{1}{d}$, $d$ and $-2d$.
		If $\Omega_2$ (resp. $\Omega_3$)
		contains a non-empty open subset $\Omega_{21}$ (resp. $\Omega_{31}$) of $M$, then from Lemma
		\ref{lem:4.45} (resp. Lemma \ref{lem:4.46}), the principal curvatures are $d$, $d$, $-2d$ on
		$\Omega_{21}$ (resp. $\Omega_{31}$).
		But this is impossible according to the constancy of the principal curvatures and $d\neq1$.
		Therefore, if $\Omega_1$ is non-empty, then $\Omega_1$ is dense in~$M$.
		
		If $\Omega_1$ is empty and $\Omega_2$ is non-empty, then $\Omega_2$ is an open subset of $M$.
		From Lemma \ref{lem:4.45}, we know that $b$ is a non-zero constant on $\Omega_2$.
		According to the fact that angle function $b$ is continuous, then
		$\Omega_3$ is empty, and $M=\Omega_2$.
		
		If both $\Omega_1$ and $\Omega_2$ are empty, then $M=\Omega_3$.
	\end{proof}
	
	In the following Lemmas (\ref{lem:4.4}--\ref{lem:4.6}) we focus on the subcases introduced in Lemma~\ref{lem:4.47}.
	\begin{lemma}\label{lem:4.4}
		Assume that {\bf Case I-(i)} occurs, then up to isometries of ${\rm Sol_0^4}$,
		$M$ is an open part of $M_{1,r}$ for some $r>0$.
	\end{lemma}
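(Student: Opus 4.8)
We are in Case I-(i): the angle functions satisfy $c=0$, $0\le d<1$ is a constant, and $\Omega_1$ is dense on $M$. By Lemma~\ref{lem:4.3} we already know $d\neq 0$, the shape operator is $AW_1=\tfrac1d W_1$, $AW_2=dW_2$, $AW_3=-2dW_3$ with three distinct constant principal curvatures, and both the covariant derivatives $W_i(a),W_i(b)$ and the induced connection coefficients are fully determined by \eqref{eqn:2.18} and \eqref{eqn:9.45}. The target is $M_{1,r}$ with $r>0$, whose normal is \eqref{eqn:8.16} and whose principal curvatures are $\coth r,\tanh r,-2\tanh r$. Matching $\tfrac1d=\coth r$ and $d=\tanh r$ pins down $d=\tanh r$, so the natural plan is to recover the explicit parametrization of $M$ and then produce an ambient isometry carrying it onto $M_{1,r}$.

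**The plan.** I would proceed by integrating the now-explicit first-order data to reconstruct the immersion, rather than by an abstract congruence argument. First, set $d=\tanh r$ for the appropriate $r>0$ and introduce coordinates adapted to the frame: since \eqref{eqn:9.45} shows the $W_3$-direction generates a closed subsystem while $W_1,W_2$ span an integrable distribution, I would find functions $x_1,x_2,x_3$ on $M$ so that $W_1,W_2,W_3$ are (up to the normalizing factors $\sinh r,\cosh r,e^{2x_3}\operatorname{sech}^2 r$ appearing in \eqref{eqn:8.17}) the coordinate directions $\partial_{x_1},\partial_{x_3},\partial_{x_2}$. Concretely, \eqref{eqn:2.18} says $W_1(a)=-\tfrac{b\sqrt{1-d^2}}{d}$, $W_1(b)=\tfrac{a\sqrt{1-d^2}}{d}$ with $W_2,W_3$ annihilating $a,b$; writing $a=-\cos x_1\,\operatorname{sech}r$, $b=-\sin x_1\,\operatorname{sech}r$ (consistent with $a^2+b^2=1-d^2=\operatorname{sech}^2 r$) turns these into $W_1(x_1)=\tfrac{\sqrt{1-d^2}}{d}=\operatorname{csch}r$, matching the factor $1/\sinh r$ in \eqref{eqn:8.17}. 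The remaining connection data in \eqref{eqn:9.45}, together with the Levi-Civita connection \eqref{eqn:2.6} of the ambient space, then lets me integrate the position map $p(x_1,x_2,x_3)$ by solving $\tilde\nabla_{V_i}p=V_i$ componentwise, exactly as the computation in the proof of Proposition~\ref{prop:3.4} runs in reverse.

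**Recovering the embedding.** The heart of the argument is to show the reconstructed $p$ has the form displayed in Example~\ref{exam:3.4}. I would verify that the frame $V_1=\sinh r\,W_1$, $V_2=\cosh r\,W_2$, $V_3=e^{2x_3}\operatorname{sech}^2 r\,W_3$ satisfies the same integrability relations $[\,\partial_{x_i}p,\partial_{x_j}p\,]$ as in Proposition~\ref{prop:3.4}, so that the distribution is genuinely a coordinate grid; the key consistency check is that the normal $N=-\cos x_1\operatorname{sech}r\,E_1-\sin x_1\operatorname{sech}r\,E_2+\tanh r\,E_4$ of \eqref{eqn:8.16} is parallel in the right sense, which follows from feeding \eqref{eqn:8.20} and \eqref{eqn:9.45} back through the Weingarten formula. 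Integrating $E_4=\partial_t$ against $V_2$ gives $t=x_3+\ln(\operatorname{sech}r)$ up to a constant, and integrating the $E_1,E_2,E_3$ components recovers the $x,y,z$ coordinates of $M_{1,r}$. Any integration constants that appear correspond precisely to a left translation and an $\mathrm{SO}(2)$-rotation in the $xy$-plane, i.e.\ to an element of the identity component \eqref{eqn:2.100} of $\mathrm{Iso}({\rm Sol_0^4})$; absorbing them is what produces the phrase ``up to isometries.''

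**Main obstacle.** The delicate point is \emph{not} the local computation but the passage from the differential identities to a clean global normal form: one must check that the chosen coordinates $(x_1,x_2,x_3)$ are honest coordinates (the $W_i$ frame is holonomic after rescaling, which requires the connection coefficients in \eqref{eqn:9.45} to be compatible — e.g.\ the $W_3$-equations force the $e^{2x_3}$ growth and must not introduce monodromy) and that the resulting map lands inside a single orbit of $H_1$ rather than a proper open piece with extra parameters. I expect the cleanest route is to argue that $M$ and $M_{1,r}$ share the same shape operator, the same second fundamental form in the adapted frame, and the same ambient curvature (automatic here), so that a Fundamental-Theorem-of-hypersurfaces / rigidity argument for the homogeneous space ${\rm Sol_0^4}$ yields an ambient isometry matching them on an open set; density of $\Omega_1$ then upgrades ``open part'' appropriately. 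The bookkeeping of which integration constants are isometries (translations and rotations) versus genuine moduli (only the single radius $r$ survives) is where the real care is required.
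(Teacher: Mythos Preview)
Your plan is correct and matches the paper's approach: both integrate the first-order data from Lemma~\ref{lem:4.3} to recover an explicit parametrization, then absorb the integration constants via a left translation and an $\mathrm{SO}(2)$-rotation. The paper carries this out by first rescaling to a commuting frame $X_1=W_1$, $X_2=W_2$, $X_3=e^{2t}W_3$ (your ``holonomic after rescaling'' observation), solving the resulting ODE system for the Cartesian coordinates $(x,y,z,t)$ directly, and only at the end setting $d=\tanh r$ and reparametrizing to match Example~\ref{exam:3.4}; it does \emph{not} invoke any fundamental-theorem-of-hypersurfaces rigidity, so your direct-integration plan is the one to execute rather than the rigidity alternative you float at the end.
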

	
	\begin{proof}
		On $\Omega_{1}$, it can be directly checked that the Gauss and Codazzi equations are satisfied by using the
		local orthonormal frame field $\{W_i\}_{i=1}^3$ from~\eqref{eqn:2.16}, together with \eqref{eqn:2.20}, \eqref{eqn:8.20}
		and \eqref{eqn:9.45}. 
		
		By using \eqref{eqn:2.2} and \eqref{eqn:2.16}, we get
		\begin{equation}\label{eqn:2.23}
			W_1(t)=0, \quad W_2(t)=-\sqrt{1-d^2}, \quad W_3(t)=0,
		\end{equation}
		where $t$ is the fourth Cartesian coordinate function on $\mathbb{R}^4$.
		From \eqref{eqn:9.45} and $[W_i,W_j]=\nabla_{W_i}W_j-\nabla_{W_j}W_i$, we have
		\begin{equation}\label{eqn:2.223}
			[W_1,W_2]=0, \quad [W_1,W_3]=0, \quad [W_2,W_3]=2\sqrt{1-d^2}W_3.
		\end{equation}

		Now, we define a new frame field $\{X_i\}_{i=1}^3$ on $M$ as follows:
		\begin{equation}\label{eqn:2.34}
			X_1=W_1, \quad X_2=W_2, \quad X_3=e^{2t}W_3.
		\end{equation}
		By using \eqref{eqn:2.23}--\eqref{eqn:2.34},
		we can verify that $[X_i,X_j]=0$, $1\leq i<j\leq3$.
		Moreover, it follows from \eqref{eqn:2.18} and \eqref{eqn:2.34} that
		\begin{equation}
			\label{eqn:2.31}    
			X_1(a)=-\frac{b\sqrt{1-d^2}}{d}, \quad
			X_1(b)=\frac{a\sqrt{1-d^2}}{d}, \quad
			X_2(a)=X_3(a)=X_2(b)=X_3(b)=0.
		\end{equation}
		Since the vector fields $\{X_i\}_{i=1}^3$ commute and are linearly independent, we can locally identify $M$ with an open subset $\Omega$ of $\mathbb{R}^3$
		and express the hypersurface $M$ by an immersion
		\begin{equation*}
			\begin{aligned}
				\Phi:\Omega\subset\mathbb{R}^3 \longrightarrow {\rm Sol_0^4}:
				(u,v,w) \longmapsto (x(u,v,w),y(u,v,w),z(u,v,w),t(u,v,w)),
			\end{aligned}
		\end{equation*}
		such that
		\begin{equation}\label{eqn:2.36}
			\begin{aligned}
				d\Phi(\partial_u)
				&=(x_u,y_u,
				z_u,t_u)=X_1,\\
				d\Phi(\partial_v)
				&=(x_v,y_v,
				z_v,t_v)=X_2,\\
				d\Phi(\partial_w)
				&=(x_w,y_w,
				z_w,t_w)=X_3.
			\end{aligned}
		\end{equation}

		From \eqref{eqn:2.31} and \eqref{eqn:2.36}, we know
		that $a$ and $b$ depend only on $u$ and
		\begin{equation}\label{eqn:2.38}
			a_u=\frac{-b\sqrt{1-d^2}}{d}, \quad b_u=\frac{a\sqrt{1-d^2}}{d}.
		\end{equation}
		Solving the equation \eqref{eqn:2.38}, we obtain
		\begin{equation}\label{eqn:2.39}
			\begin{aligned}
				a(u)=c_1\cos\left(\tfrac{\sqrt{1-d^2}}{d}u\right)+c_2\sin\left(\tfrac{\sqrt{1-d^2}}{d}u\right), \quad
				b(u)=c_1\sin\left(\tfrac{\sqrt{1-d^2}}{d}u\right)-c_2\cos\left(\tfrac{\sqrt{1-d^2}}{d}u\right),
			\end{aligned}
		\end{equation}
		where $c_1^2+c_2^2=1-d^2$ and $c_1, c_2$ are constants.
		
		According to \eqref{eqn:2.2}, \eqref{eqn:2.16} and \eqref{eqn:2.34}, we have
		\begin{equation}\label{eqn:2.37}
			X_1= \left(\frac{be^t}{\sqrt{1-d^2}},-\frac{ae^t}{\sqrt{1-d^2}},0,0\right),\quad
			X_2= \left(\frac{ade^t}{\sqrt{1-d^2}},\frac{bde^t}{\sqrt{1-d^2}},0,-\sqrt{1-d^2}\right),\quad
			X_3= \left(0,0,1,0\right).
		\end{equation}
		The equations \eqref{eqn:2.36} and \eqref{eqn:2.37} show that
		$x$ and $y$ depend only on $(u,v)$, $z$ depends only on $w$,
		and $t$ depends only on $v$.
		It also holds that $t_v=-\sqrt{1-d^2}$, i.e.\ $t(v)=-\sqrt{1-d^2}v+c_3$, where $c_3$ is a constant.
		Hence, $v(t)=-\tfrac{t-c_3}{\sqrt{1-d^2}}$ and
		\begin{equation}\label{eqn:2.40}
			v_t=-\frac{1}{\sqrt{1-d^2}}.
		\end{equation}
		In what follows we use $t$ instead of $v$ as a local coordinate.
		
		From \eqref{eqn:2.36} and \eqref{eqn:2.39}--\eqref{eqn:2.40}, we have
		\begin{equation*}
			\begin{aligned}
				x_u
				&=\frac{e^t}{\sqrt{1-d^2}}
				\left(c_1\sin\left(\tfrac{\sqrt{1-d^2}}{d}u\right)-c_2\cos\left(\tfrac{\sqrt{1-d^2}}{d}u\right)\right),\\
				y_u
				&=\frac{-e^t}{\sqrt{1-d^2}}
				\left(c_1\cos\left(\tfrac{\sqrt{1-d^2}}{d}u\right)+c_2\sin\left(\tfrac{\sqrt{1-d^2}}{d}u\right)\right),\\
				x_t
				&=x_v v_t
				=\frac{-de^t}{1-d^2}
				\left(c_1\cos\left(\tfrac{\sqrt{1-d^2}}{d}u\right)+c_2\sin\left(\tfrac{\sqrt{1-d^2}}{d}u\right)\right),\\
				y_t
				&=y_v v_t
				=\frac{-de^t}{1-d^2}
				\left(c_1\sin\left(\tfrac{\sqrt{1-d^2}}{d}u\right)-c_2\cos\left(\tfrac{\sqrt{1-d^2}}{d}u\right)\right).
			\end{aligned}
		\end{equation*}
		By solving these equations, we obtain
		\begin{equation}\label{eqn:2.42}
			\begin{aligned}
				x(u,t) &= \frac{-de^t}{1-d^2}\left(c_1\cos\left(\tfrac{\sqrt{1-d^2}}{d}u\right)+c_2\sin\left(\tfrac{\sqrt{1-d^2}}{d}u\right)\right)+c_4,\\
				y(u,t) &= \frac{-de^t}{1-d^2}\left(c_1\sin\left(\tfrac{\sqrt{1-d^2}}{d}u\right)-c_2\cos\left(\tfrac{\sqrt{1-d^2}}{d}u\right)\right)+c_5,
			\end{aligned}
		\end{equation}
		where $c_4, c_5$ are constants.
		
		From \eqref{eqn:2.36} and \eqref{eqn:2.37}, we have $z_w=1$, i.e.\ 
		$z=w+c_6$, where $c_6$ is a constant.
		Now, by reparametrizing to use $z$ as a local coordinate, we have
		\begin{equation*}
			\begin{aligned}
				\Phi:\Omega\subset\mathbb{R}^3 \longrightarrow {\rm Sol_0^4}:
				(u,t,z)\longmapsto (x(u,t),y(u,t),z,t),
			\end{aligned}
		\end{equation*}
		where $x(u,t), y(u,t)$ are given by \eqref{eqn:2.42}.
		After a left translation by $(-c_4,-c_5,0,0)$,
		which is an isometry of ${\rm Sol_0^4}$, we obtain
		\begin{equation*}
			\begin{aligned}
				\Phi(u,t,z)
				=\Bigg(&\frac{-de^t}{1-d^2}\left(c_1\cos\left(\tfrac{\sqrt{1-d^2}}{d}u\right)
				+c_2\sin\left(\tfrac{\sqrt{1-d^2}}{d}u\right)\right), \\
				&\quad \frac{-de^t}{1-d^2}\left(c_1\sin\left(\tfrac{\sqrt{1-d^2}}{d}u\right)
				-c_2\cos\left(\tfrac{\sqrt{1-d^2}}{d}u\right)\right)
				,z,t\Bigg).
			\end{aligned}
		\end{equation*}

		Since $0<d<1$, we may assume that $d=\tanh r$ for some constant $r>0$.
		Let $L_1=
		\cosh r \left(\begin{smallmatrix}
			-c_1 & c_2 \\
			-c_2 & -c_1 \\
		\end{smallmatrix}\right)
		\in {\rm SO(2)}$.
		Then $L_1(\Phi(u,t,z))=(e^t\cos ({\rm csch}\, (r) u)\sinh r,
		e^t\sin ({\rm csch}\, (r) u)\sinh r,z,t)$.
		By taking the reparametrization $x_1={\rm csch}\, (r) u$, $x_2=z$
		and $x_3=t-\ln({\rm sech}\, r)$, we get
		$$
		L_1(\Phi(x_1,x_2,x_3))=(e^{x_3}\cos x_1\tanh r,e^{x_3}\sin x_1\tanh r,
		x_2,x_3+\ln({\rm sech}\, r)).
		$$
		This shows that $M$ is an open part of $M_{1,r}$ for some $r>0$.
	\end{proof}
	
	\begin{lemma}\label{lem:4.5}
		Assume that {\bf Case I-(ii)} occurs, then up to isometries of ${\rm Sol_0^4}$,
		$M$ is an open part of $M_{2,r}$ for some $r\geq0$.
	\end{lemma}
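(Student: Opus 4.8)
The plan is to mirror the proof of Lemma \ref{lem:4.4}, exploiting the extra information from Lemma \ref{lem:4.45} that here the angle functions $a$ and $b$ are constants. As a first step I would feed \eqref{eqn:2.20}, the shape operator \eqref{eqn:8.21} and the connection \eqref{eqn:9.46} into the Gauss equation \eqref{eqn:2.14} and the Codazzi equation \eqref{eqn:2.15} and check, as a routine tensorial computation, that they hold identically. Using \eqref{eqn:2.2} and \eqref{eqn:2.16} I would then record the action on the fourth Cartesian coordinate, $W_1(t)=0$, $W_2(t)=-\sqrt{1-d^2}$, $W_3(t)=0$, and read off the Lie brackets from \eqref{eqn:9.46} via $[W_i,W_j]=\nabla_{W_i}W_j-\nabla_{W_j}W_i$, obtaining
\begin{equation*}
[W_1,W_2]=\sqrt{1-d^2}\,W_1,\qquad [W_1,W_3]=0,\qquad [W_2,W_3]=2\sqrt{1-d^2}\,W_3.
\end{equation*}

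The new feature relative to Case I-(i) is the nonvanishing bracket $[W_1,W_2]=\sqrt{1-d^2}\,W_1$, and the key trick is to absorb it by $t$-dependent rescalings. I would set
\begin{equation*}
X_1=e^{-t}W_1,\qquad X_2=W_2,\qquad X_3=e^{2t}W_3,
\end{equation*}
and verify, using $W_1(t)=W_3(t)=0$ and $W_2(t)=-\sqrt{1-d^2}$, that $[X_i,X_j]=0$ for all $i,j$. This yields a local chart $(u,v,s)$ and an immersion $\Phi(u,v,s)=(x,y,z,t)$ with $d\Phi(\partial_u)=X_1$, $d\Phi(\partial_v)=X_2$, $d\Phi(\partial_s)=X_3$. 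Expressing $X_1,X_2,X_3$ in Cartesian coordinates through \eqref{eqn:2.2} produces a first-order system which, since $a,b,d$ are all constant, integrates at once: $z=s+{\rm const}$, $t$ is affine in $v$ with $\tfrac{\partial t}{\partial v}=-\sqrt{1-d^2}$, and $x,y$ are affine in $u$ with an inhomogeneous term depending only on $t$. Passing from $v$ to $t$ as coordinate, I would integrate explicitly for $x(u,t)$ and $y(u,t)$.

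Finally I would normalize by isometries. As $a,b$ are constant, a single rotation in the $xy$-plane taken from \eqref{eqn:2.100} can be chosen so that $a=0$, $b=-\sqrt{1-d^2}$, leaving $c=0$ and $d$ fixed; writing $d=\tanh r$ with $r\geq0$ makes the normal $N=-{\rm sech}\, r\,E_2+\tanh r\,E_4$, in agreement with Proposition \ref{prop:3.7}. With $a=0$ the coordinate $x$ becomes free, $z$ is free, and the surviving term collapses to $y=e^{t}\sinh r+{\rm const}$; a left translation clears the integration constants and a reparametrization $x_3=t-\ln({\rm sech}\, r)$ (together with renaming $x,z$) rewrites $\Phi$ as $(x_1,e^{x_3}\tanh r,x_2,x_3+\ln({\rm sech}\, r))$, that is, an open part of $M_{2,r}$. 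The converse is furnished by Proposition \ref{prop:3.7}, which confirms $M_{2,r}$ has $c=0$, constant $b\neq0$ and hence realizes Case I-(ii). I expect the only genuine obstacle to be identifying the correct exponential $t$-weights in $X_1,X_2,X_3$ that kill the nonvanishing bracket; once the commuting frame is fixed the constancy of $a,b$ makes the remaining integration and the isometry bookkeeping straightforward.
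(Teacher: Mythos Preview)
Your proposal is correct and follows essentially the same route as the paper: verify Gauss--Codazzi from Lemma~\ref{lem:4.45}, pass to the commuting frame $e^{-t}W_1,\,W_2,\,e^{2t}W_3$, integrate the resulting first-order system, and normalise by a rotation plus a left translation to land on $M_{2,r}$ with $d=\tanh r$. The only cosmetic difference is ordering: the paper applies the $xy$-rotation $L_2$ at the outset (so that $a=0$, $b=-\sqrt{1-d^2}$ and the frame \eqref{eqn:z2.23} is already aligned with $E_1,E_2$), whereas you set up the frame and PDEs with general constant $a,b$ and rotate afterwards; both lead to the same integration and the same conclusion.
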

	
	\begin{proof}
		Recall that, in this case, $N=aE_1+bE_2+dE_4$ where $a^2+b^2+d^2=1$,
		$b\neq0$, $0\leq d<1$ and $a,b,d$ are constants.
		Applying Lemma \ref{lem:4.45},
		we can verify that all the Gauss and Codazzi equations are satisfied.
		
		Consider the rotation matrix $L_2= \tfrac{1}{\sqrt{1-d^2}}\left(
		\begin{smallmatrix}
			-b & a \\
			-a & -b \\
		\end{smallmatrix}
		\right)\in {\rm SO(2)}.$
		Then the unit normal vector field of $L_2(M)=\bar{M}$ is
		$\bar{N}=-\sqrt{1-d^2}E_2+dE_4$.
		An orthonormal frame field on $\bar{M}$ is then given by
		\begin{equation}\label{eqn:z2.23}
			W_1=-E_1, \quad W_2=-dE_2-\sqrt{1-d^2}E_4, \quad W_3=E_3.
		\end{equation}
		Note that the frame $\{W_i\}_{i=1}^3$ still satisfies Lemma \ref{lem:4.45} on $\bar{M}$.
		Then, by \eqref{eqn:9.46}, it holds that
		\begin{equation}\label{eqn:2.233}
			[W_1,W_2]=\sqrt{1-d^2}W_1, \quad [W_1,W_3]=0, \quad [W_2,W_3]=2\sqrt{1-d^2}W_3.
		\end{equation}
		From \eqref{eqn:2.2} and \eqref{eqn:z2.23}, we have
		\begin{equation}\label{eqn:x2.23}
			W_1(t)=0, \quad W_2(t)=-\sqrt{1-d^2}, \quad W_3(t)=0.
		\end{equation}

		Now, we define a new frame field $\{Y_i\}_{i=1}^3$ on $\bar M$ as follows:
		\begin{equation}\label{eqn:2.334}
			Y_1=e^{-t}W_1, \quad Y_2=W_2, \quad Y_3=e^{2t}W_3.
		\end{equation}
		By \eqref{eqn:2.233}--\eqref{eqn:2.334},
		we have $[Y_i,Y_j]=0$, for $1\leq i<j\leq3$.
		
		Since the vector fields $\{Y_i\}_{i=1}^3$ commute and are linearly independent, we can locally identify $\bar{M}$ with an open subset $\Omega$ of $\mathbb{R}^3$
		and express the hypersurface $\bar{M}$ by an immersion
		\begin{equation*}
			\Phi:\Omega\subset\mathbb{R}^3 \longrightarrow {\rm Sol_0^4}:
			(u,v,w) \longmapsto (x(u,v,w),y(u,v,w),z(u,v,w),t(u,v,w)),
		\end{equation*}
		such that
		\begin{equation}\label{eqn:2.47}
			\begin{aligned}
				d\Phi(\partial_u)
				&=(x_u,y_u,
				z_u,t_u)
				=Y_1,\\
				d\Phi(\partial_v)
				&=(x_v,y_v,
				z_v,t_v)
				=Y_2,\\
				d\Phi(\partial_w)
				&=(x_w,y_w,
				z_w,t_w)
				=Y_3.
			\end{aligned}
		\end{equation}
		From \eqref{eqn:2.2}, \eqref{eqn:z2.23} and \eqref{eqn:2.334}, we also have
		\begin{equation}\label{eqn:2.48}
			Y_1 =(-1,0,0,0),\quad
			Y_2 =\left(0,-de^t,0,-\sqrt{1-d^2} \right),\quad
			Y_3 =(0,0,1,0).
		\end{equation}
		The equations \eqref{eqn:2.47} and \eqref{eqn:2.48} show that
		$x$ depends only on $u$, $y$ depends only on $v$,
		$z$ depends only on $w$ and $t$ depends only on $v$.
		Moreover,~\eqref{eqn:2.40} also holds in this case, which allows us to use $t$ instead of $v$ as a local coordinate.
		
		From \eqref{eqn:2.47} and \eqref{eqn:2.48}, we have
		$x_u=-1$ and $z_w=1$, i.e.\
		$x=-u+c_7$ and $z=w+c_{8}$
		where $c_7, c_8$ are constants.
		From \eqref{eqn:2.40}, \eqref{eqn:2.47} and \eqref{eqn:2.48},
		we get $y_t=\tfrac{de^t}{\sqrt{1-d^2}}$.
		Solving this equation, we obtain
		$y=\tfrac{de^t}{\sqrt{1-d^2}}+c_{9}$
		where $c_{9}$ is a constant.
		
		Using $x$ and $z$ as the new local coordinates, we obtain
		$\Phi(x,t,z) = (x,\tfrac{de^t}{\sqrt{1-d^2}}+c_{9},z,t)$.
		After a left translation by $(0,-c_{9},0,0)$,
		which is an isometry of ${\rm Sol_0^4}$, we obtain
		\begin{equation}\label{eqn:9.70}
			\Phi(x,t,z)=\left(x,\frac{de^t}{\sqrt{1-d^2}},z,t\right).
		\end{equation}
		
		Since $0\leq d<1$, we may assume that $d=\tanh r$ for some constant $r\geq0$.
		By taking the reparametrization $x_1=x$, $x_2=z$ and
		$x_3=t-\ln({\rm sech}\, r)$, we obtain
		$$
		\Phi(x_1,x_2,x_3)=\left(x_1,e^{x_3}\tanh r,x_2,x_3+\ln ({\rm sech}\, r)\right).
		$$
		This shows that, up to isometries of ${\rm Sol_0^4}$,
		$M$ is an open part of $M_{2,r}$ for some $r\geq0$.
	\end{proof}
	
	\begin{lemma}\label{lem:4.6}
		Assume that {\bf Case I-(iii)} occurs, then up to isometries of ${\rm Sol_0^4}$,
		$M$ is an open part of $M_{2,r}$ for some $r\geq0$.
	\end{lemma}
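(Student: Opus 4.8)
In Case I-(iii) we have $M = \Omega_3$, so $b \equiv 0$ everywhere on $M$. Then the normal is $N = aE_1 + dE_4$ with $a^2 + d^2 = 1$ and $d$ constant, forcing $a$ constant by Lemma~\ref{lem:4.46}. The plan is to recover an explicit immersion exactly as in Lemma~\ref{lem:4.5}, and then to recognize the image as an open part of some $M_{2,r}$ after applying an isometry of ${\rm Sol_0^4}$. First I would invoke Lemma~\ref{lem:4.46} to record that the shape operator and connection coefficients on $M$ coincide with those of Case I-(ii); in particular all Gauss and Codazzi equations are automatically satisfied, so there is no integrability obstruction to reconstructing $M$.

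\medskip
\noindent\textbf{Normalizing the normal and integrating the frame.}
The frame from \eqref{eqn:2.16} specializes, with $b=0$, to
\begin{equation*}
	W_1 = -E_2, \quad W_2 = \tfrac{ad}{\sqrt{1-d^2}}E_1 - \sqrt{1-d^2}\,E_4, \quad W_3 = E_3,
\end{equation*}
so here $\sqrt{1-d^2} = |a|$ and $W_2$ involves $E_1$ rather than $E_2$. The key structural point is that this situation differs from Case I-(ii) only by which of the two horizontal directions $E_1, E_2$ plays the ``$W_1$'' role. I would therefore apply the rotation in the $xy$-plane by $\pi/2$ — the element of ${\rm SO(2)} \subset {\rm Iso(Sol_0^4)}$ swapping the $E_1$ and $E_2$ axes — to reduce $N = aE_1 + dE_4$ to the normalized form $\bar N = -\sqrt{1-d^2}\,E_2 + dE_4$ of Lemma~\ref{lem:4.5}. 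After this isometry, the transformed hypersurface $\bar M$ satisfies verbatim the hypotheses and frame relations \eqref{eqn:z2.23}--\eqref{eqn:2.334} used in the proof of Lemma~\ref{lem:4.5}.

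\medskip
\noindent\textbf{Concluding via the $M_{2,r}$ reconstruction.}
Once $\bar M$ is in the normalized form, the entire integration carried out in Lemma~\ref{lem:4.5} applies without change: one builds the commuting frame $\{Y_i\}$, integrates \eqref{eqn:2.47}--\eqref{eqn:2.48} to get $x = -u + c_7$, $z = s + c_8$, and $y(t) = \tfrac{d e^t}{\sqrt{1-d^2}} + c_9$, and after a left translation arrives at the immersion \eqref{eqn:9.70}. Writing $d = \tanh r$ for some $r \geq 0$ and re-parameterizing yields exactly $M_{2,r}$. Thus up to isometries of ${\rm Sol_0^4}$, $M$ is an open part of $M_{2,r}$ for some $r \geq 0$. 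The only genuinely new ingredient relative to Lemma~\ref{lem:4.5} is the preliminary rotation identifying the $b \equiv 0$ configuration with the normalized $b \neq 0$ one; I expect verifying that this single ${\rm SO(2)}$ rotation carries Case I-(iii) onto the normalized setting of Lemma~\ref{lem:4.5} to be the main (and only) subtlety, since everything downstream is a direct quotation of the earlier argument.
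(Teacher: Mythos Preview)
Your proposal is correct and follows essentially the same strategy as the paper: apply an isometry of the $xy$-plane to turn $N = aE_1 + dE_4$ into the normalized form $-\sqrt{1-d^2}\,E_2 + dE_4$, and then quote the reconstruction argument of Lemma~\ref{lem:4.5}. The only cosmetic difference is that the paper first uses the reflection $\mathrm{diag}(-1,1)\in\mathrm{O}(2)$ to force $a=-\sqrt{1-d^2}$ and then the swap $L_3=\begin{pmatrix}0&1\\1&0\end{pmatrix}\in\mathrm{O}(2)$, whereas you use a single $\mathrm{SO}(2)$ rotation; just be careful that a rotation by $+\pi/2$ does not literally ``swap'' $E_1$ and $E_2$, and the correct sign $\pm\pi/2$ depends on the sign of $a$.
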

	
	\begin{proof}
		Recall that, in this case, we have $N=aE_1+dE_4$, where $a^2+d^2=1$,
		$0\leq d<1$ and $a,d$ are constants.
		If necessary, up to the action of
		$\left(
		\begin{smallmatrix}
			-1 & 0 \\
			0 & 1 \\
		\end{smallmatrix}
		\right)\in {\rm O(2)}$, we can always assume that $a=-\sqrt{1-d^2}$ on $M$.
		
		Let $L_3=\left(
		\begin{smallmatrix}
			0 & 1 \\
			1 & 0 \\
		\end{smallmatrix}
		\right)\in {\rm O(2)}$.
		Then the hypersurface $L_3(M)$ has constant principal curvatures and  unit normal vector field $-\sqrt{1-d^2}E_2+dE_4$.
		By applying Lemmas \ref{lem:4.47} and \ref{lem:4.5} to $L_3(M)$,
		we know that, up to isometries of ${\rm Sol_0^4}$,
		$M$ is an open part of $M_{2,r}$ for some $r\geq0$.
	\end{proof}

	We now focus on {\bf Case II}.
	In what follows, we prove that up to isometries of ${\rm Sol_0^4}$,
	$M$ is an open part of $M_{3,r}$ for some $r\geq0$.
	Recall that, in this case, we have $N=aE_1+bE_2+cE_3+dE_4$, where
	$a^2+b^2+c^2+d^2=1$, $0\leq d<1$, $0<c\leq 1$ and $c,d$ are constants.
	\begin{lemma}
		\label{lemma:case2-ab-zero}
		Assume that {\bf Case II} occurs, then $a$ and $b$ are identically zero on $M$.
	\end{lemma}
	\begin{proof}
		Suppose, on the contrary, that $a\neq0$ in an open subset $\Omega$ of $M$.
		Then, we have $T_i(a)=-\tfrac{b}{a}T_i(b)$ for $i=1,2,3$, where $\{T_i\}_{i=1}^3$
		is an orthonormal frame field defined
		by \eqref{eqn:8.1}.
		Combining these with~\eqref{eqn:2.6},
		the Weingarten formula and the assumption of $c, d$ being constants, we obtain
		\begin{equation*}
			\begin{aligned}
				AT_1
				&=\left(a^2d+d(b^2-2(c^2+d^2))+\tfrac{(a^2+b^2)T_1(b)}{a}\right)T_1 \\
				& \ \ \ +\tfrac{(bc+ad)(3ad+T_1(b))}{a}T_2-\tfrac{(ac-bd)(3ad+T_1(b))}{a}T_3,\\
				AT_2
				&=\left(\tfrac{(a^2+b^2)T_2(b)}{a}+3a(c^2+d^2)\right)T_1 \\
				& \ \ \ +\left(-3abc-2a^2d+\tfrac{bcT_2(b)}{a}+d(b^2+c^2+d^2+T_2(b))\right)T_2 \\
				& \ \ \ +\tfrac{(ac-bd)(3a^2-T_2(b))}{a}T_3,\\
				AT_3
				&=\left(3b(c^2+d^2)+\tfrac{(a^2+b^2)T_3(b)}{a}\right)T_1
				-\tfrac{(bc+ad)(3ab-T_3(b))}{a}T_2\\
				& \ \ \ +\left(3abc+a^2d-2b^2d+d(c^2+d^2)-cT_3(b)+\tfrac{bdT_3(b)}{a}\right)T_3. \\
			\end{aligned}
		\end{equation*}
		By symmetry of the shape operator we obtain
		\begin{align}
			d(3bc+T_1(b))-a(3c^2+T_2(b))+\tfrac{b(cT_1(b)-bT_2(b))}{a}&=0,\label{eqn:4.3}\\
			-c(3bc+T_1(b))-a(3cd+T_3(b))+\tfrac{b(dT_1(b)-bT_3(b))}{a}&=0,~\label{eqn:4.4}\\
			3a^3c+3ab^2c+(bd-ac)T_2(b)-(bc+ad)T_3(b)&=0.~\label{eqn:4.5}
		\end{align}
		From \eqref{eqn:4.3} and \eqref{eqn:4.4}, we get
		\begin{equation*}
			T_2(b) = \frac{-3a^2c^2+bcT_1(b)+ad(3bc+T_1(b))}{a^2+b^2},\quad
			T_3(b) = \frac{-3ac(bc+ad)+(bd-ac)T_1(b)}{a^2+b^2}.
		\end{equation*}
		Substituting these into \eqref{eqn:4.5}, we obtain $3ac=0$ on $\Omega$,
		which contradicts with the assumption that $a\neq0$ and $c > 0$.
		Thus $a=0$ on $M$.
		With a similar argument, we can also prove that $b=0$ on~$M$.
	\end{proof}
	
	\begin{lemma}
		\label{lemma:case2-immersion}
		Assume that {\bf Case II} occurs, then up to isometries of ${\rm Sol_0^4}$,
		$M$ is an open part of $M_{3,r}$ for some $r\geq0$.
	\end{lemma}
	\begin{proof}
		By Lemma~\ref{lemma:case2-ab-zero}, we know that $N=cE_3+dE_4$, where $c^2+d^2=1$, $0<c\leq 1$,
		$0\leq d<1$ and $c$ and $d$ are constants. 
		Then the following is an orthonormal frame field on $M$:
		\begin{equation}\label{eqn:2.70}
			\overline{W}_1=E_1, \quad \overline{W}_2=E_2, \quad \overline{W}_3=dE_3-cE_4.
		\end{equation}
		From \eqref{eqn:2.2}, we have
		\begin{equation}\label{eqn:2.z70}
			\overline{W}_1(t)=0, \quad \overline{W}_2(t)=0, \quad \overline{W}_3(t)=-c.
		\end{equation}
		By using \eqref{eqn:2.6}, \eqref{eqn:2.70}, the Gauss and Weingarten formulae,
		we obtain
		\begin{equation}\label{eqn:2.71}
			\begin{alignedat}{3}
				&\nabla_{\overline{W}_1}\overline{W}_1=-c\overline{W}_3, \quad
				&&\nabla_{\overline{W}_1}\overline{W}_2=0, \quad
				&&\nabla_{\overline{W}_1}\overline{W}_3=c\overline{W}_1,\\
				&\nabla_{\overline{W}_2}\overline{W}_1=0, \quad
				&&\nabla_{\overline{W}_2}\overline{W}_2=-c\overline{W}_3, \quad
				&&\nabla_{\overline{W}_2}\overline{W}_3=c\overline{W}_2,\\
				&\nabla_{\overline{W}_3}\overline{W}_1=0, \quad
				&&\nabla_{\overline{W}_3}\overline{W}_2=0, \quad
				&&\nabla_{\overline{W}_3}\overline{W}_3=0,\\
				&A\overline{W}_1=d\overline{W}_1, \quad 
				&&A\overline{W}_2=d\overline{W}_2, \quad
				&&A\overline{W}_3=-2d\overline{W}_3.
			\end{alignedat}
		\end{equation}
		Then it can be checked that all the Gauss and Codazzi equations are satisfied.
		From \eqref{eqn:2.71}, we get
		\begin{equation}\label{eqn:2.l23}
			[\overline{W}_1,\overline{W}_2]=0, \quad [\overline{W}_1,\overline{W}_3]=c\overline{W}_1, \quad [\overline{W}_2,\overline{W}_3]=c\overline{W}_2.
		\end{equation}

		Now, we define a new frame field $\{Z_i\}_{i=1}^3$ on $M$ as follows:
		\begin{equation}\label{eqn:2.3341}
			Z_1=e^{-t}\overline{W}_1, \quad Z_2=e^{-t}\overline{W}_2, \quad Z_3=\overline{W}_3.
		\end{equation}
		By using \eqref{eqn:2.z70}, \eqref{eqn:2.l23} and \eqref{eqn:2.3341},
		we obtain $[Z_i,Z_j]=0$, $1\leq i<j\leq3$.
		
		Since the vector fields $\{Z_i\}_{i=1}^3$ commute and are linearly independent, we may locally identify $M$ with an open subset $\Omega$ of $\mathbb{R}^3$
		and express the hypersurface $M$ by an immersion
		\begin{equation*}
			\begin{aligned}
				\Phi:\Omega\subset\mathbb{R}^3 &\longrightarrow {\rm Sol_0^4},\\
				(u,v,w) &\longmapsto (x(u,v,w),y(u,v,w),z(u,v,w),t(u,v,w)),
			\end{aligned}
		\end{equation*}
		such that
		\begin{equation}\label{eqn:2.76}
			\begin{aligned}
				d\Phi(\partial_u)
				&=(x_u,y_u,
				z_u,t_u)=Z_1,\\
				d\Phi(\partial_v)
				&=(x_v,y_v,
				z_v,t_v)=Z_2,\\
				d\Phi(\partial_w)
				&=(x_w,y_w,
				z_w,t_w)=Z_3.
			\end{aligned}
		\end{equation}
		From \eqref{eqn:2.2}, \eqref{eqn:2.70} and \eqref{eqn:2.3341}, we also have
		\begin{equation}\label{eqn:2.77}
			Z_1=(1,0,0,0), \quad Z_2=(0,1,0,0), \quad Z_3=\left(0,0,de^{-2t},-c\right).
		\end{equation}
		The equations \eqref{eqn:2.76} and \eqref{eqn:2.77} show that
		$x$ depends only on $u$, $y$ depends only on $v$, $z$ and $t$ depend only on $w$.
		
		From~\eqref{eqn:2.76} and \eqref{eqn:2.77}, it follows that
		$x=u+c_{10}$, $y=v+c_{11}$,
		where $c_{10}, c_{11}$ are constants.
		By \eqref{eqn:2.76} and \eqref{eqn:2.77}, we have $t_w=-c$, i.e.\ $t=-cw+c_{12}$, where $c_{12}$ is a constant.
		Hence,
		$w=-\tfrac{t-c_{12}}{c}$ and
		\begin{equation}\label{eqn:2.z40}
			w_t=-\frac{1}{c}.
		\end{equation}
		In what follows, we use $t$ instead of $w$ as a local coordinate.
		Again, by using \eqref{eqn:2.76}, \eqref{eqn:2.77} and \eqref{eqn:2.z40}, we get
		$z_t=-\tfrac{d}{c}e^{-2t}$. Hence,
		$z=\tfrac{d}{2c}e^{-2t}+c_{13}$, where $c_{13}$ is a constant.	
		Using $x$ and $y$ instead of $u$ and $v$ as local coordinates,
		we obtain
		$\Phi(x,y,t)= \left(x,y,\tfrac{d}{2c}e^{-2t}+c_{13},t\right)$.
		After a left translation by $(0,0,-c_{13},0)$,
		we obtain
		$
		\Phi(x,y,t)=\left(x,y,\tfrac{d}{2c}e^{-2t},t\right).
		$
		
		Since $0\leq d<1$, we may assume that $d=\tanh (2r)$ for some $r\geq0$.
		By taking the reparametrization $x_1=x$, $x_2=y$ and
		$x_3=t-\tfrac{1}{2}\ln(\cosh(2r))$, we get
		$$
		\Phi(x_1,x_2,x_3)=(x_1,x_2,\tfrac{1}{2}e^{-2x_3}\tanh(2r),x_3+\tfrac{1}{2}\ln(\cosh(2r))).
		$$
		This shows that $M$ is an open part of $M_{3,r}$ for some $r\geq0$.
	\end{proof}
	
	Finally, we consider {\bf Case III}. In this case, the unit normal
	vector field is $N=E_4$, and $TM=\text{span}\{E_1,E_2,E_3\}$,
	where $\{E_i\}_{i=1}^4$ is defined by \eqref{eqn:2.2}. By using the equations \eqref{eqn:2.2} and \eqref{eqn:2.3}, we can derive that $M$ is an open part of
	$\{(x,y,z,r)\in {\rm Sol_0^4}\mid x,y,z\in\mathbb{R}\}$
	for some $r\in\mathbb{R}$, which is congruent to $M_{4,0}$.
	
	To complete the proof, we refer to Propositions~\ref{prop:M1r}--\ref{prop:M4r}, where we have shown that the hypersurfaces $M_{1,r}$, $M_{2,r}$, $M_{3,r}$ and $M_{4,0}$ have constant principal curvatures and satisfy the properties mentioned in the statement of the theorem.
	This concludes the proof of Theorem~\ref{thm:1.1}.
	
	\subsection{Proof of Theorem \texorpdfstring{\ref{thm:1.2}}{1.2}}~
	
	Let $M$ be a homogeneous hypersurface of ${\rm Sol_0^4}$, 
	then $M$ is an orbit of some closed subgroup $\widetilde{G}\subset{\rm Sol_0^4}\rtimes({\rm O(2)}\times\mathbb{Z}_2)$. 
	Let $N$ be a unit normal vector field of $M$ and fix a point $p_{0} \in M$.
	From the assumption that $M$ is homogeneous, for any $p \in M$,
	there exists an isometry $\phi\in {\rm Sol_0^4}\rtimes({\rm O(2)}\times\mathbb{Z}_2)$
	such that $\phi\left(M\right)=M$ and $\phi(p_{0})=p$.
	From $\phi(M)=M$ it follows that $N_{p}=\pm d \phi_{p_{0}}(N_{p_{0}})$, and from~\eqref{eqn:2.1},~\eqref{eqn:2.2} and~\eqref{eqn:2.100} it follows that
	$d\phi_{p_{0}} (E_3|_{p_{0}})=\pm E_3|_{p}$ and
	$d\phi_{p_{0}} (E_4|_{p_{0}})=E_4|_{p}$.
	Hence, we have
	$$
	\begin{aligned}
		c(p)&=g(N_{p},E_3|_{p})=g(\pm d \phi_{p_{0}}(N_{p_{0}}),\pm d\phi_{p_{0}} (E_3|_{p_{0}}))=\pm c(p_{0}),\\
		d(p)&=g(N_{p},E_4|_{p})=g(\pm d \phi_{p_{0}}(N_{p_{0}}),d\phi_{p_{0}} (E_4|_{p_{0}}))=\pm d(p_{0}).
	\end{aligned}
	$$
	Because $M$ is connected, these equations imply that
	the angle functions $c$ and $d$ are constant on~$M$.
	
	Since $M$ is a homogeneous hypersurface, it has constant principal curvatures. Then, from
	Theorem \ref{thm:1.1},
	we know that, up to isometries of ${\rm Sol_0^4}$,
	$M$ is either $M_{1,r}$ for some $r>0$, or $M$ is $M_{2,r}$ for some $r\geq0$,
	or $M$ is $M_{3,r}$ for some $r\geq0$, or $M$ is $M_{4,0}$.
	Conversely, we have already proved in Propositions \ref{prop:M1r}--\ref{prop:M4r}
	that these hypersurfaces are homogeneous.
	This concludes the proof of Theorem \ref{thm:1.2}.

		\begin{fund}
		M. D'haene was supported by Methusalem grant METH/21/03-long term structural funding of the Flemish Government and FWO (Research Foundation Flanders) grant K217724N. 
		G. Wei was supported by NSFC Grant No. 12171164 and Guangdong Natural Science Foundation Grant No. 2023A1515010510. 
		Z. Yao was supported by NSFC Grant No. 12401061. 
		X. Zhang was supported by China Postdoctoral Science Foundation Grant No. 2025M773115. 
	\end{fund}
	
	\begin{acknow}
		The authors would like to thank Professor Luc Vrancken, Professor Zejun Hu and Dr.\ Mateo Anarella for their helpful         conversations on this work. 
	\end{acknow}
	
	\vskip 0.1cm
	\noindent\textbf{Data availibility}
	Data sharing not applicable to this article as no datasets were generated or analysed during the current study.
	
	\vskip 0.1cm
	\noindent\textbf{Conflict of interest} On behalf of all authors, the corresponding author states that there is no conflict of interest.
	
	\normalsize\noindent

\end{document}